\def\nset{{\mathbb{N}}}
\def\rset{\mathbb R}
\def\eqsp{\;}
\newcommand{\ie}{{\em i.e.} }
\newcommand{\as}{\text{a.s.} }
\newcommand{\un}{\ensuremath{\mathbbm{1}}}
\newcommand{\eqdef}{:=}
\def\Xset{\mathsf{X}} 
\def\Y{\mathcal{Y}} 
\def\PP{\mathbb{P}} 
\def\PE{\mathbb{E}} 
\def\bPP{\overline{\mathbb{P}}} 
\def\bPE{\overline{\mathbb{E}}} 
\def\iPP{\check{\mathbb{P}}} 
\def\iPE{\check{\mathbb{E}}} 
\def\cP{\check{P}} 
\def\cPP{\check{\mathbb{P}}} 
\def\cPE{\check{\mathbb{E}}} 
\def\Id{\mathrm{Id}} %
\def\tv{\mathrm{TV}}
\def\Cset{\mathcal{C}} 
\def\Dset{\mathcal{D}} 
\def\Aset{\mathcal{A}} 
\newcommand{\iid}{\ensuremath{\stackrel{\footnotesize{\textup{i.i.d.}}}{\sim}}}
\newlength{\noteWidth}
\newtheorem{theo}{Theorem}[section]
\newtheorem{lemma}[theo]{Lemma}
\newtheorem{coro}[theo]{Corollary}
\newtheorem{prop}[theo]{Proposition}
\newtheorem{defi}[theo]{Definition}
\theoremstyle{remark}
\newtheorem{rem}{Remark}
\newcounter{hypoconbis}
\newcounter{saveconbis}
\newcommand\debutR{\begin{list} {\textbf{R\arabic{hypoconbis}}}{\usecounter{hypoconbis}}\setcounter{hypoconbis}{\value{saveconbis}}}
\newcommand\finR{\end{list}\setcounter{saveconbis}{\value{hypoconbis}}}
\newcounter{hypocom}
\newcounter{savecom}
\newcommand{\debutA}{\begin{list}{\textbf{A\arabic{hypocom}}}{\usecounter{hypocom}}\setcounter{hypocom}{\value{savecom}}}
\newcommand{\finA}{\end{list}\setcounter{savecom}{\value{hypocom}}}
\newcounter{hypoDM}
\newcounter{saveDM}
\newcommand\debutDM{\begin{list} {\textbf{DM\arabic{hypoDM}}}{\usecounter{hypoDM}}\setcounter{hypoDM}{\value{saveDM}}}
\newcommand\finDM{\end{list}\setcounter{saveDM}{\value{hypoDM}}}
\begin{document}

\title[]{State-dependent Foster-Lyapunov criteria for
  subgeometric convergence of Markov chains}

\author{S. Connor} 
\address{Department of Mathematics, University of York, York, YO10 5DD. United Kingdom}
\email{sbc502@york.ac.uk}

\author{G. Fort}
\address{LTCI, CNRS-TELECOM ParisTech, 46 rue Barrault, 75634 Paris Cédex 13, France}
\email{gfort@tsi.enst.fr}

\keywords{Primary: Markov chains; Foster-Lyapunov functions; state-dependent
drift conditions; regularity. Secondary: Tame chains; networks of queues}

\subjclass[2000]{60J10, 37A25}

\begin{abstract}
We consider a form of state-dependent drift condition for a general Markov chain, whereby the chain \emph{subsampled} at some deterministic time satisfies a geometric Foster-Lyapunov condition. We present sufficient criteria for such a drift condition to exist, and use these to partially answer a question posed in \cite{Connor.Kendall-2007} concerning the existence of so-called `tame' Markov chains. Furthermore, we show that our `subsampled drift condition' implies the existence of finite moments for the return time to a small set. 
\end{abstract}

\maketitle


{\bf Acknowledgements} The authors acknowledge the support of CRiSM, University
of Warwick, UK.  This work is also partly supported by the French National
research Agency (ANR) under the program ANR-05-BLAN-0299.  The second author
also thanks Prof. P. Priouret for his many comments and insights on an earlier
version of the manuscript.



\section{Introduction and notation}
\label{sec:intro}

Let $\{\Phi_n, n \geq 0\}$ be a time-homogeneous Markov chain on a state space
$\Xset$, with transition kernel $P$. Our goal in this paper is to develop a new
criterion for determining the ergodic properties of $\Phi$. Specifically, we
consider a form of \emph{state-dependent} drift condition for $\Phi$, whereby
the chain \emph{subsampled} at some deterministic time satisfies a geometric
Foster-Lyapunov condition. Drift conditions are classical tools to prove the
stability of Markov chains. Most of the literature addresses the case when the
drift inequality is satisfied by the kernel
$P$~\cite{meyn:tweedie:1993,jarner:roberts:2002,fort:moulines:2003,douc:fort:moulines:soulier:2004}.
Nevertheless, depending upon the application, it may be far easier to prove a
drift condition for a state-dependent iterated kernel than for $P$ itself (see
e.g.  \cite{meyn:tweedie:1994} and section~\ref{sec:applications-2}).
State-dependent drift conditions for phi-irreducible chains were originally
studied by Meyn and Tweedie~\cite{meyn:tweedie:1994}, who gave criteria for
$\Phi$ to be Harris-recurrent, positive Harris-recurrent and geometrically
ergodic.  The drift condition that we develop in this paper is different from
those in \cite{meyn:tweedie:1994}, as will be highlighted in
Section~\ref{ssec:singleDrift}, and can be used to infer a greater range of
convergence rates for $\Phi$ (including both geometric and subgeometric rates).
When applied to subgeometric rates, the present work extends the theory about
subgeometric chains: in \cite{tuominen:tweedie:1994} (resp.
\cite{douc:fort:moulines:soulier:2004}), it is discussed how a nested family of
drift conditions (resp.  a single drift) on the kernel $P$ is related to the
control of modulated moments of the return-time to some set. In this paper, we
provide similar drift criteria in terms of state-dependent iterates of the
transition kernel. Control of such return-times is a key step to establish
ergodicity and, more generally, limit theorems for the chains. In this paper, we
also address a converse result and show how a state-dependent drift condition can be
deduced from the convergence of the iterates of $P$. Such a converse result
exists for geometric chains (see \cite{meyn:tweedie:1993}) but, to our best
knowledge, this is pioneering work for subgeometric chains.

We begin with a little notation; the unfamiliar reader can refer to
\cite{meyn:tweedie:1993}. For any non-negative function $f$ and $n\in\nset$ we
write $P^nf(x)$ for $\int P^n(x,dy)f(y)$ where $P^n(x,dy)$ denotes the
$n$-step transition probability kernel; and for a signed measure $\mu$ we write
$\mu(f) = \int\mu(dy)f(y)$.  The norm $\|\mu \|_f$ is defined as $\sup_{\{g :
  |g|\leq f \}}|\mu(g)|$. This generalises the total variation norm,
$\|\cdot\|_{\tv} \equiv \|\cdot\|_1$. The first return time to a set $\Aset$ is
denoted by $ \tau_\Aset \eqdef \inf \{n \geq 1, \Phi_n \in \Aset \}$ and the
hitting-time on $\Aset$ is denoted by $\displaystyle{\sigma_\Aset \eqdef \inf
  \{n \geq 0, \Phi_n \in \Aset \}}$.  A set $\Cset$ is called \emph{small} (or
$\nu$-small) if there exist some non-trivial measure $\nu$ and constant
$\varepsilon>0$ such that $ P(x, \cdot) \geq \varepsilon \nu(\cdot)$ for all $
x\in\Cset$.  Any $\sigma$-finite measure $\pi$ satisfying $\pi = \pi P$ is
called \emph{invariant}.  An aperiodic chain $\Phi$ that possesses a finite
invariant measure $\pi$ is \emph{ergodic} and for any $x$, $\| P^n(x,\cdot) -
\pi \|_{\tv} \rightarrow 0$ as $n\to \infty$ (\cite[Theorem
13.0.1]{meyn:tweedie:1993}).  This condition is also equivalent to the existence
of a moment for the return time to some accessible small set $\Cset$:
$\sup_{x\in\Cset} \PE_x [ \tau_\Cset ] <\infty$.  $\Phi$ is said to be
\emph{geometrically ergodic} if there exist a function
$V:\Xset\rightarrow[0,\infty)$ and constants $\gamma\in(0,1)$, $R < \infty$,
such that for any $x \in \Xset$, $ \| P^n(x, \cdot)-\pi(\cdot)\|_{V} \leq R \ 
V(x)\gamma^n$. This is equivalent to the existence of a scale function
$V:\Xset\rightarrow[1,\infty)$, a small set $\Cset$, and constants
$\beta\in(0,1)$, $b<\infty$, such that
\begin{equation}
 \label{eq:FL-drift}
 PV(x) \leq \beta V(x) + b\un_{\Cset} (x) \eqsp, 
\end{equation}
where $\un_\Aset$ is the indicator function of the set $\Aset$. This geometric
drift condition is also equivalent to the existence of an exponential moment
for the return time to the set $\Cset$: $\sup_{x\in\Cset} \PE_x [
\beta^{-\tau_\Cset} ] <\infty$ (\cite[Chapter 15]{meyn:tweedie:1993}).  More
generally, $\Phi$ is said to be \emph{$(f,r)$-ergodic} if there exist functions
$r:\nset\rightarrow [1,\infty)$ and $f:\Xset\rightarrow [1,\infty)$ such that,
for all $x$ in a full and absorbing set, $r(n)\|P^n (x,\cdot)-\pi(\cdot)\|_f
\rightarrow 0$ as $n \to \infty$.

In this paper we will be studying \emph{subgeometrically ergodic} chains; the
class $\Lambda$ of subgeometric rates $r =\{r(n), n\geq 0\}$ is defined in
\cite{nummelin:tuominen:1983} as follows: call $\Lambda_0$ the set of rate
functions $r_0 =\{r_0(n), n\geq 0\}$ such that $r_0(0) \geq 1$, $n \mapsto
r_0(n)$ is nondecreasing and $\lim_{n\to\infty} \ln r_0(n)/n \downarrow 0$.
Then $r \in \Lambda$ iff $r$ is nonnegative, non-decreasing and there exists
$r_0 \in \Lambda_0$ such that $\lim_{n\to\infty} r(n) /r_0(n)=1$. Sufficient
drift conditions for $(f,r)$-ergodicity, relative to the one-step transition
kernel $P$ exist in the literature
\cite{tuominen:tweedie:1994,douc:fort:moulines:soulier:2004}. The converse
result is, to our best knowledge, an open question (except when $f=1$, see
\cite{tuominen:tweedie:1994}).

The remainder of this paper is laid out as follows. In Section~\ref{sec:Tame}
we consider when it is possible to find functions
$n:\Xset\rightarrow\nset_\star$ and $V:\Xset\rightarrow [1,\infty)$, and $\beta
\in (0,1)$ such that
\begin{equation}
  \label{eq:drift-early}
   P^{n(x)}V(x) \leq \beta V(x) + b\un_\Cset(x) \eqsp.
\end{equation}
That is, such that the chain $\Phi$ subsampled at time $n(x)$ exhibits a
geometric drift condition. The sufficient conditions presented are ultimately
based on the existence of moments for the return time to a small set.  In
Section~\ref{sec:Moment} we consider the inverse problem: starting from a drift
condition of the form \eqref{eq:drift-early}, what can be said about the
existence of moments of $\tau_\Cset$?  In Section~\ref{sec:applications} these
results are applied to the classification of tame chains, and to a discrete-time
process forming part of a perfect simulation algorithm for such chains. In
Section~\ref{sec:appli-markov}, it is shown how these results can be used to
prove the subgeometric ergodicity of strong Markov processes.  Proofs of the
main results are provided in Section~\ref{sec:Proof}.

\section{Foster-Lyapunov drift inequalities under subsampling }
\label{sec:Tame}
We first consider when it is possible to deterministically subsample a chain
$\Phi$ at rate $n$, in order to produce a Foster-Lyapunov drift condition, \ie
an inequality of the form $ P^{n(x)}V(x) \leq \beta V(x) + b\un_\Cset(x) $ for
some function $V: \Xset \to [1, \infty)$, constants $\beta \in (0,1)$, $b <
\infty$ and a measurable set $\Cset$.
The main result of this section is the following generalisation of
\cite[Theorem 5.26]{Connor-2007}.  It shows how a Foster-Lyapunov drift
condition may be established for a subsampled chain from knowledge of the rate
of convergence of the signed measures $\{P^n(x, \cdot) - P^n(x',\cdot), n\geq
0\}$.

\begin{theo}
\label{theo:Ergo2Subsample}
Assume that there exists a non-decreasing function $r : \nset \to (0, \infty)$
with $\lim_{k\to\infty} r(k) = \infty$, some measurable functions $W,V : \Xset
\to [1, \infty)$ and a constant $C< \infty$ such that
  \begin{align}
    & \forall (x,x') \in \Xset \times \Xset \eqsp, \qquad \qquad r(k) \ \ \|
    P^k(x, \cdot) - P^k(x', \cdot) \|_W \leq C \left( V(x) + V(x') \right)
    \eqsp,
    \label{theo:Ergo2Subsample:Cond1}
    \\
    & \exists x_0 \in \Xset \eqsp, \qquad \qquad \qquad \quad \sup_{k \geq 0} P^k W(x_0) <
    \infty \eqsp.
    \label{theo:Ergo2Subsample:Cond2}
  \end{align}
  Let $\beta \in (0,1)$ and $n : \Xset \to \nset$ satisfy $n(x) \geq r^{-1}
  \left(\frac{C}{\beta} \ \frac{V(x)}{W(x)} \right)$, where $r^{-1}(t) \eqdef
  \inf \{x \in \nset, r(x) \geq t\}$ denotes the generalized inverse of $r$.
  Then there exists $b< \infty$ such that $P^{n(x)} W (x) \leq \beta W(x) + b
  $.  In addition, for any $\beta < \beta' < 1$ with $\Cset \eqdef \{x \in
  \Xset, W(x) \leq b(\beta'-\beta)^{-1}\}$,
\begin{equation}
  \label{eq:resulting-drift}
   P^{n(x)} W \leq \beta' W +b\un_\Cset \,.
\end{equation}
\end{theo}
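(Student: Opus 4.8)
The plan is to use condition~\eqref{theo:Ergo2Subsample:Cond1} with the fixed point $x_0$ playing the role of $x'$, so that $r$ controls how far $P^k W(x)$ can be from $P^k W(x_0)$, and to combine this with the uniform bound~\eqref{theo:Ergo2Subsample:Cond2}. First I would note that, since $|W| \leq W$, the definition of $\|\cdot\|_W$ gives $|\mu(W)| \leq \|\mu\|_W$ for any signed measure $\mu$; applying this with $\mu = P^k(x,\cdot) - P^k(x_0,\cdot)$ and invoking~\eqref{theo:Ergo2Subsample:Cond1} yields, for every $k \in \nset$,
\[
 P^k W(x) \;\leq\; P^k W(x_0) + \frac{C\,(V(x)+V(x_0))}{r(k)} \;\leq\; M + \frac{C\,V(x)}{r(k)} + \frac{C\,V(x_0)}{r(k)} \eqsp,
\]
where $M \eqdef \sup_{k\geq 0} P^k W(x_0)$, finite by~\eqref{theo:Ergo2Subsample:Cond2}.

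Next I would specialise to $k = n(x)$. The key elementary fact is that, because $r$ is non-decreasing, the set $\{k \in \nset : r(k) \geq t\}$ is an up-set (non-empty since $r(k)\to\infty$), so its infimum $r^{-1}(t)$ belongs to it; hence $n(x) \geq r^{-1}(t)$ forces $r(n(x)) \geq t$. Taking $t = (C/\beta)\, V(x)/W(x)$ gives $C\,V(x)/r(n(x)) \leq \beta\, W(x)$, while $r(n(x)) \geq r(0) > 0$ by monotonicity bounds the last term by $C\,V(x_0)/r(0)$, independently of $x$. With $b \eqdef M + C\,V(x_0)/r(0) < \infty$ this gives $P^{n(x)} W(x) \leq \beta W(x) + b$ for all $x$, the first assertion. (A possible value $n(x)=0$ causes no trouble: then $P^0 W(x) = W(x)$ and the constraint $n(x) \geq r^{-1}(\cdot)$ just says $r(0) \geq (C/\beta)V(x)/W(x)$, so the same inequality holds.)

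Finally, for~\eqref{eq:resulting-drift} I would split according to whether $x \in \Cset = \{x : W(x) \leq b(\beta'-\beta)^{-1}\}$. On $\Cset$, $\beta \leq \beta'$ yields $P^{n(x)}W(x) \leq \beta W(x) + b \leq \beta' W(x) + b\un_\Cset(x)$; off $\Cset$ one has $b < (\beta'-\beta)W(x)$, hence $P^{n(x)}W(x) \leq \beta W(x) + b < \beta' W(x) = \beta' W(x) + b\un_\Cset(x)$.

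There is no genuine obstacle here: the argument is a short manipulation, and the only point requiring care is the correct use of the generalized inverse $r^{-1}$ together with the monotonicity of $r$ (and, marginally, the degenerate case $n(x)=0$).
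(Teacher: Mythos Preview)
Your proof is correct and follows essentially the same route as the paper's: bound $P^kW(x)$ by $P^kW(x_0)+C(V(x)+V(x_0))/r(k)$ via~\eqref{theo:Ergo2Subsample:Cond1}, set $b=\sup_k P^kW(x_0)+CV(x_0)/r(0)$, and use $r(n(x))\geq (C/\beta)V(x)/W(x)$ from the definition of the generalized inverse. Your extra remarks on why $r^{-1}(t)$ is attained and on the degenerate case $n(x)=0$ are sound but not strictly needed, since the general bound at $k=0$ already covers that case.
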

\begin{proof}
  From (\ref{theo:Ergo2Subsample:Cond1}) and (\ref{theo:Ergo2Subsample:Cond2}),
  we have for any $x \in \Xset$, $k \in \nset$,
\[
P^k W(x) \leq \frac{C}{r(k)} V(x) + P^k W(x_0) + \frac{C}{r(k)} V(x_0) \leq
\frac{C}{r(k)} V(x) + b \eqsp,
\]
where $b \eqdef \sup_{k \geq 0} P^k W(x_0) + C \frac{V(x_0)}{r(0)}$. By
definition of $x \mapsto n(x)$, $ C V(x) / r(n(x)) \leq \beta W(x)$.  This
yields $ P^{n(x)} W \leq \beta W + b = \beta' W + \left(\beta - \beta' \right)
W + b$, and $\left(\beta - \beta' \right) W + b \leq 0$ on $\Cset^c$.  Since
$\lim_{k\to\infty} r(k) = \infty$, the set $\{x \in \nset,\, r(x) \geq t\}$ is
non-empty whatever $t \geq 0$.
\end{proof}

\subsection{Uniformly  ergodic chains}
When assumption~(\ref{theo:Ergo2Subsample:Cond1}) holds for some \emph{bounded}
function $V$, we have $\lim_n \sup_{(x,x') \in \Xset \times \Xset} \| P^n(x,
\cdot) - P^n(x', \cdot) \|_{\tv} =0$.  Then classical results on the Dobrushin
coefficient imply that $P$ admits a unique invariant distribution $\pi$ and
$\sup_{x \in \Xset} \|P^n(x, \cdot) - \pi\|_{\tv} \leq \rho^n$ for some $\rho
\in (0,1)$.  Hence the drift condition (\ref{eq:FL-drift}) holds for some
bounded function $\tilde{V}$ (\cite[Theorem 16.0.1]{meyn:tweedie:1993}), and
this implies (\ref{eq:resulting-drift}) with $n(x)=c$ for any $c \in \nset$. We
are able to retrieve this result too from our result.  Applying
Theorem~\ref{theo:Ergo2Subsample} with $W=\omega$ and $n(x) = n_\star$ such
that $n_\star \geq r^{-1}\left( C \ \beta^{-1}\ \sup_\Xset V \right)$ we have
$P^{n_\star} W \leq \beta W +b$. By classical computations (see e.g. the proof
of \cite[Theorem 16.1.4]{meyn:tweedie:1993}), this yields $ P \tilde W \leq
\beta^{1/n_\star} \ \tilde W + b \beta^{1/n_\star-1} n_\star^{-1}$ with $\omega
\leq \tilde W \leq \omega \beta^{-1}$; hence (\ref{eq:resulting-drift}) holds
with $n(x) = c$ for some (and thus any) constant $c$.

In the sequel, we do not impose boundedness on $V$, thus allowing chains which
are not necessarily uniformly ergodic.

\subsection{Sufficient conditions for Assumptions \eqref{theo:Ergo2Subsample:Cond1}-\eqref{theo:Ergo2Subsample:Cond2}}\label{subsubsec:sufficient-conditions}
When $P$ is phi-irreducible and aperiodic,
assumption~(\ref{theo:Ergo2Subsample:Cond2}) is implied by any one of the
following equivalent conditions (see \cite[Theorem 14.0.1]{meyn:tweedie:1993}):
\begin{enumerate}[(i)]
\item $P$ possesses a unique invariant probability $\pi$ and $\pi(W) <
  \infty$;
\item there exists a small set $\Cset$ such that $\sup_{x \in \Cset} \PE_x
  \left[ \sum_{k=0}^{\tau_\Cset -1} W(\Phi_k) \right] < \infty$;
\item there exist a function $U : \Xset \to (0, \infty]$ finite at some
  $x_\star \in \Xset$, a constant $b < \infty$ and a small set $\Cset$ such
  that $PU \leq U - W + b \un_\Cset$.
\end{enumerate}
The main difficulty is to prove (\ref{theo:Ergo2Subsample:Cond1});
Proposition~\ref{prop:TwoMoments} provides sufficient conditions.

\begin{prop}
\label{prop:TwoMoments}
Let $P$ be a phi-irreducible and aperiodic transition kernel. Assume that there
exist a small set $\Cset$, measurable functions $W,V : \Xset \to [1, \infty)$
and a constant $b<\infty$ such that $\sup_\Cset V < \infty$, and
\begin{equation}
  \label{eq:DoubleControl}
\left\{
  \begin{array}{l}
PV(x)  \leq V(x) - W(x) + b \un_\Cset(x) \eqsp,  \\
PW(x)  \leq W(x)  + b \un_\Cset(x) \eqsp. 
  \end{array} \right.\end{equation}
Then \textit{(i)} there exists $x_0 \in \Xset$ such that $\sup_{n \geq 0} P^n
W(x_0) < \infty$; and \textit{(ii)} there exists a constant $C < \infty$ such
that for any $(x,x') \in \Xset^2$,
\[
n \ \|P^n(x, \cdot) - P^n(x', \cdot) \|_{W} \leq C \{V(x) + V(x')\}\,.
\]
\end{prop}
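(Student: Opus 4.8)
The plan is as follows. Part \textit{(i)} is immediate: the first line of \eqref{eq:DoubleControl} is exactly condition (iii) of the list preceding the proposition, with $U\eqdef V$ (finite everywhere, since $V\geq1$), so \cite[Theorem 14.0.1]{meyn:tweedie:1993} gives $\sup_{n}P^nW(x_0)<\infty$ for some $x_0$, and moreover that $P$ is positive Harris with invariant probability $\pi$ satisfying $\pi(W)<\infty$.

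For part \textit{(ii)} I would extract two ``engine'' facts from \eqref{eq:DoubleControl}. Since $PV\geq0$, the first line gives the pointwise bound $W\leq V+b$, hence $\sup_\Cset W<\infty$. Next, $M_n\eqdef V(\Phi_{n\wedge\tau_\Cset})+\sum_{k=0}^{n\wedge\tau_\Cset-1}W(\Phi_k)$ is a nonnegative supermartingale (using only the first line of \eqref{eq:DoubleControl}, together with $\un_\Cset(\Phi_k)=0$ for $1\leq k<\tau_\Cset$), so optional stopping and monotone convergence yield the modulated-moment bound $\PE_x[\sum_{k=0}^{\tau_\Cset-1}W(\Phi_k)]\leq V(x)+b$, and in particular $\PE_x[\tau_\Cset]\leq V(x)+b$ since $W\geq1$. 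The decisive point, and the only place the \emph{second} line of \eqref{eq:DoubleControl} is used, is a monotonicity lemma: for every $x$, $n\mapsto a_n(x)\eqdef\PE_x[W(\Phi_n)\un_{\sigma_\Cset>n}]$ is non-increasing, because on $\{\sigma_\Cset>n\}$ one has $\Phi_n\notin\Cset$, so $a_{n+1}(x)\leq\PE_x[\un_{\sigma_\Cset>n}\,PW(\Phi_n)]\leq\PE_x[\un_{\sigma_\Cset>n}\,W(\Phi_n)]=a_n(x)$; combined with $\sum_{n\geq0}a_n(x)=\PE_x[\sum_{k<\sigma_\Cset}W(\Phi_k)]\leq V(x)+b$ this forces $\PE_x[W(\Phi_n)\un_{\sigma_\Cset>n}]\leq(V(x)+b)/(n+1)$.

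Next I would pass to the split chain (legitimate since $\Cset$ is small, \cite[Ch.~17]{meyn:tweedie:1993}): it carries an accessible atom $\alpha$, $P^n$ is recovered by projection, and \eqref{eq:DoubleControl}, the modulated-moment bound and the monotonicity lemma all transfer (with $\sigma_\Cset$ replaced by the hitting time $\sigma_\alpha$; $\sup_\Cset V<\infty$ ensures $\PE_\alpha[\sum_{k<\tau_\alpha}W(\Phi_k)]<\infty$). It then suffices to bound $\|P^n(x,\cdot)-P^n(x',\cdot)\|_W$ for the split chain, and for $|f|\leq W$ the first-entrance-at-$\alpha$ decomposition gives
\[
P^n(x,f)-P^n(x',f)=\underbrace{\PE_x[f(\Phi_n)\un_{\sigma_\alpha>n}]-\PE_{x'}[f(\Phi_n)\un_{\sigma_\alpha>n}]}_{(A)}+\underbrace{\sum_{k=0}^{n}\big(\PP_x(\sigma_\alpha=k)-\PP_{x'}(\sigma_\alpha=k)\big)\,P^{n-k}(\alpha,f)}_{(B)}\,.
\]
Term $(A)$ is at most $a_n(x)+a_n(x')\leq(V(x)+V(x')+2b)/(n+1)$ by the monotonicity lemma, which already has the right form.

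The heart of the matter is $(B)$, and it hinges on a cancellation rather than on a convergence rate for $P^m(\alpha,\cdot)$: a single drift level gives $P^m(\alpha,f)\to\pi(f)$ with \emph{no} rate, but this slow ``mode'' does not depend on the starting point, whereas the signed sequence $\PP_x(\sigma_\alpha=\cdot)-\PP_{x'}(\sigma_\alpha=\cdot)$ has total mass $0$ (positive recurrence) and puts mass $\leq(V(x)+V(x')+2b)/(m+1)$ on $\{\,\cdot\geq m\}$ (the monotonicity lemma again, using $W\geq1$). Concretely I would split the convolution in $(B)$ at $k\simeq n/2$: the part $k>n/2$ is bounded by $(\PP_x(\sigma_\alpha>n/2)+\PP_{x'}(\sigma_\alpha>n/2))\sup_mP^m(\alpha,W)=O((V(x)+V(x'))/n)$, using part \textit{(i)}; for the part $k\leq n/2$ one telescopes $P^{n-k}(\alpha,f)-P^n(\alpha,f)$ into increments $P^{j+1}(\alpha,f)-P^j(\alpha,f)$, exchanges summation, and integrates against the tails $\PP_x(\sigma_\alpha\geq\cdot)$, reducing everything to decay estimates for these increments. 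The increments form the convolution of the summable sequence $\PE_\alpha[f(\Phi_\cdot)\un_{\tau_\alpha>\cdot}]$ (summable precisely by the modulated-moment bound, i.e. by the first line of \eqref{eq:DoubleControl} with $W\geq1$) with the increments of the renewal sequence $m\mapsto P^m(\alpha,\un_\alpha)$, and one closes the argument using quantitative renewal estimates. I expect making this cancellation quantitative — that is, controlling $(B)$ — to be the main obstacle; note that a bare coupling argument does not suffice, since the coalescence time of two independent copies regenerating at $\alpha$ is too heavy-tailed (its tail is $\asymp n^{-1/2}$ already when inter-regeneration times have finite variance), so it yields only $o(1)$ rather than $O(n^{-1})$.
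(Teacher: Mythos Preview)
Your architecture matches the paper's: split chain, first-entrance-type decomposition, and the engine estimate $(n+1)\,\PE_x[W(\Phi_n)\un_{n<\sigma}]\leq C\,V(x)$. Your monotonicity argument for this estimate (with $\sigma=\sigma_\Cset$) is correct and in fact slicker than the paper's telescoping proof of the analogous bound. Part \textit{(i)} is fine.

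There is, however, a real gap in the transfer to the atom. On $\{\sigma_\alpha>n\}$ the split chain may sit in $\Cset\times\{0\}$, and there the split kernel satisfies only $\cP W\leq W+b$ (the second line of \eqref{eq:DoubleControl} lifts with indicator $\un_{\Cset\times\{0,1\}}$, not $\un_\alpha$); hence $n\mapsto\cPE_{x^\star}[W(\Phi_n)\un_{\sigma_\alpha>n}]$ need \emph{not} be monotone and your argument breaks. The paper repairs this by decomposing along the successive returns $\tau_\Cset^q$ to $\Cset\times\{0,1\}$: at each such visit the chain enters $\alpha$ with probability $\epsilon$, so the number of visits to $\Cset$ before reaching $\alpha$ is geometric, and the contributions of the excursions between consecutive $\tau_\Cset^q$ can be summed. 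This is the content of the paper's ``Proof of inequality~\eqref{eq:ControlforAtom}'' (which also rides on a preliminary reduction to a strongly aperiodic $m$-skeleton that you pass over).

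For term $(B)$ your sketch points in the right direction but the paper's route is shorter and complete. Rather than first-entrance plus a telescoping of $P^{j+1}(\alpha,f)-P^j(\alpha,f)$, the paper uses the \emph{first-entrance--last-exit} decomposition, which produces the two taboo terms (your $(A)$) plus the single convolution $|a_x\ast u-a_y\ast u|\ast t_{|f|}(n)$, where $a_x(k)=\cPP_{x^\star}(\tau_\alpha=k)$, $u$ is the renewal sequence at $\alpha$, and $t_f(k)=\cPE_\alpha[f(\Phi_k)\un_{k\leq\tau_\alpha}]$. The engine estimate (now for $\tau_\alpha$, via the previous paragraph) gives $\sup_k k\,t_W(k)<\infty$; the remaining renewal input is Lindvall's bound $\sup_n(n+1)\,|a_x\ast u-a_y\ast u|(n)\leq C\{\cPE_{x^\star}[\tau_\alpha]+\cPE_{y^\star}[\tau_\alpha]\}\leq C\{V(x)+V(y)\}$~\cite{lindvall:1979}. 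Your diagnosis that the cancellation in $(B)$ must come from renewal theory rather than coupling is exactly right; Lindvall's inequality is what packages it in one stroke.
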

When the first inequality in (\ref{eq:DoubleControl}) holds, then it also holds
by replacing $W$ with $\tilde W \eqdef 1$, which trivially satisfies the second
inequality in (\ref{eq:DoubleControl}). In this setting
Proposition~\ref{prop:TwoMoments} is \cite[Theorem 13.4.4]{meyn:tweedie:1993}.
Nevertheless, the interest of satisfying the second inequality with an
unbounded function $W$ is that this allows for control of $ \|P^n(x, \cdot) -
P^n(x', \cdot) \|_{W}$ with a stronger norm than total variation.
Proposition~\ref{prop:TwoMoments} also provides a rate of convergence for $
\|P^n(x, \cdot) - P^n(x', \cdot) \|_{W}$ to zero; this rate is stronger than
that which could be deduced from the control of $\|P^n(x, \cdot) - \pi \|_{W}$ under similar assumptions~\cite[Chapter 14]{meyn:tweedie:1993}.

The following two results show that (\ref{eq:DoubleControl}) holds when we have
a subgeometric-type drift inequality, or when we are able to control modulated
moments of the return time to a small set $\Cset$.
A proof of Proposition~\ref{lemma:Drift2Ergod} can be found in \cite[Lemma 5.9]{Connor-2007}.

\begin{prop}
\label{lemma:Drift2Ergod}
Assume that there exist a set $\Cset$, a constant $b<\infty$, a measurable
function $V : \Xset \to [1,\infty)$ and a continuously differentiable
increasing concave function $\phi : [1,\infty) \to (0, \infty)$, such that
\[
PV \leq V - \phi \circ V + b \un_\Cset \eqsp, \qquad \qquad \sup_\Cset V <
\infty \eqsp, \qquad \qquad \inf_{[1,\infty)} \phi >0 \eqsp.
\]
Then (\ref{eq:DoubleControl}) holds with $W \propto \phi \circ V$.
\end{prop}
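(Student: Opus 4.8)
The plan is to take $W$ to be (a positive multiple of) $\phi\circ V$. Since $\phi$ is increasing, $\phi\circ V\ge\inf_{[1,\infty)}\phi=\phi(1)>0$, so we may set $W\eqdef\phi\circ V$ and, normalising if necessary so that $W$ is $[1,\infty)$-valued (this is the only role of the hypothesis $\inf\phi>0$), regard $W$ as a measurable map $\Xset\to[1,\infty)$ proportional to $\phi\circ V$. With this choice the first line of \eqref{eq:DoubleControl} is nothing but the assumed drift $PV\le V-\phi\circ V+b\un_\Cset$. So the whole of the statement reduces to producing a finite constant $b'$ with $PW\le W+b'\un_\Cset$.

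The main idea is to combine the concavity of $\phi$ with Jensen's inequality. Fix $x\in\Xset$ and put $m\eqdef PV(x)=\int V(y)\,P(x,dy)$. Because $V\ge 1$ and $P(x,\cdot)$ is a probability measure, $m\ge 1$; and because the drift gives $PV(x)\le V(x)-\phi(V(x))+b\un_\Cset(x)\le V(x)+b<\infty$, we have $m\in[1,\infty)$, i.e.\ $m$ lies in the domain of $\phi$. Since $\phi$ is concave and continuously differentiable it lies below its tangent line at $m$: $\phi(t)\le\phi(m)+\phi'(m)(t-m)$ for every $t\ge 1$. Substituting $t=V(y)$ and integrating against $P(x,dy)$ gives
\[
PW(x)=\int\phi(V(y))\,P(x,dy)\le\phi(m)+\phi'(m)\bigl(PV(x)-m\bigr)=\phi\bigl(PV(x)\bigr).
\]
The point of going through the support line rather than quoting Jensen directly is that it makes the bound legitimate even though $\phi\circ V$ is not known a priori to be $P(x,\cdot)$-integrable: the right-hand side is finite because $PV(x)<\infty$, so the integral on the left is finite too.

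It remains to estimate $\phi(PV(x))$ from the drift using the monotonicity of $\phi$. If $x\notin\Cset$ the drift gives $PV(x)\le V(x)-\phi(V(x))\le V(x)$, hence $\phi(PV(x))\le\phi(V(x))=W(x)$ and so $PW(x)\le W(x)$. If $x\in\Cset$ then $PV(x)\le V(x)+b\le\sup_\Cset V+b\eqdef M<\infty$, where we used $\sup_\Cset V<\infty$; since $\phi$ is nondecreasing, $\phi(PV(x))\le\phi(M)\eqdef b'$, so $PW(x)\le b'\le W(x)+b'$. Combining the two cases yields $PW\le W+b'\un_\Cset$ (after enlarging $b'$, if need be, so that it also dominates the constant in the first line), which is exactly the second line of \eqref{eq:DoubleControl}.

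The one genuinely delicate step is the Jensen/support-line bound $\int\phi\circ V\,dP\le\phi\bigl(\int V\,dP\bigr)$ for a possibly non-integrable $\phi\circ V$; this is where concavity and $C^1$-regularity of $\phi$ are used together, and everything else is routine: $\inf\phi>0$ only serves to normalise $W$, $\sup_\Cset V<\infty$ only serves to bound $PW$ by a constant on $\Cset$, and no growth property of $\phi$ beyond monotonicity and concavity plays any role. In short, the proposition is essentially ``Jensen plus monotonicity''.
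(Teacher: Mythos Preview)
Your argument is correct and is the natural one: the paper does not prove this proposition itself but simply refers to \cite[Lemma 5.9]{Connor-2007}, and the argument there is precisely Jensen's inequality for the concave $\phi$ followed by monotonicity of $\phi$, exactly as you have written it out.

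One minor slip worth flagging: if $\phi(1)<1$ and you rescale to $W=c\,\phi\circ V$ with $c=1/\phi(1)>1$ so that $W\ge 1$, then the first line of \eqref{eq:DoubleControl} with the \emph{original} $V$ is no longer literally the assumed drift --- off $\Cset$ it would demand $PV\le V-c\,\phi\circ V$, which is stronger than what you have. The fix is trivial (rescale $V$ and $b$ by the same factor $c$), but as written your sentence ``the first line is nothing but the assumed drift'' is not quite consistent with the normalisation you invoke just before it.
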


\begin{prop}
 \label{lemma:Moment2Ergod}
 Assume that there exist a set $\Cset$, a non decreasing rate function $r :
 \nset \to (0, \infty)$ such that $r(0) \geq 1$ and $\sup_{x \in \Cset} \PE_x
 \left[ \sum_{k=1}^{\tau_\Cset} r(k) \right]< \infty$.  Then
 (\ref{eq:DoubleControl}) holds with
\[
V(x) = \PE_x \left[ \sum_{k=0}^{\sigma_\Cset} r(k) \right] \eqsp, \qquad W(x) =
\PE_x \left[ r(\sigma_\Cset) \right]\eqsp.
\]
\end{prop}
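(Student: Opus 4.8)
The plan is to verify the two drift inequalities in \eqref{eq:DoubleControl} directly, by a first-step (Markov) decomposition, distinguishing the cases $x \in \Cset$ and $x \notin \Cset$. I would begin by recording a few elementary facts. Since $r$ is non-decreasing with $r(0) \ge 1$, every term satisfies $r(k) \ge 1$, so $V(x) = \PE_x\big[\sum_{k=0}^{\sigma_\Cset} r(k)\big] \ge r(0) \ge 1$ and $W(x) = \PE_x[r(\sigma_\Cset)] \ge r(0) \ge 1$ (values in $[1,\infty]$, with $r(\sigma_\Cset)$ read as $\lim_k r(k)$ on $\{\sigma_\Cset=\infty\}$); and on $\Cset$ one has $\sigma_\Cset = 0$, hence $V \equiv W \equiv r(0)$ on $\Cset$, so in particular $\sup_\Cset V = r(0) < \infty$, the additional requirement of Proposition~\ref{prop:TwoMoments}. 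Set $b_\star \eqdef \sup_{x\in\Cset}\PE_x\big[\sum_{k=1}^{\tau_\Cset} r(k)\big] < \infty$ and $b \eqdef r(0) + b_\star$.

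The key step is the first-step formula obtained from the Markov property: conditioning on $\Phi_1$ and using that the first hitting time of $\Cset$ for the chain observed from time $1$ onwards equals $\tau_\Cset - 1$ of the original chain (with $\infty - 1 \eqdef \infty$; this holds whether or not $\Phi_0 \in \Cset$, since $\tau_\Cset$ only inspects times $\ge 1$), one gets, for every $x \in \Xset$,
\begin{equation*}
  PW(x) = \PE_x\left[ r(\tau_\Cset - 1) \right]\,, \qquad\qquad PV(x) = \PE_x\left[ \sum_{k=0}^{\tau_\Cset - 1} r(k) \right]\,.
\end{equation*}
These identities reduce the whole statement to monotonicity of $r$ and to $b_\star < \infty$.

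It then remains to conclude. For $x \notin \Cset$ we have $\sigma_\Cset = \tau_\Cset$, hence $W(x) = \PE_x[r(\tau_\Cset)] \ge \PE_x[r(\tau_\Cset-1)] = PW(x)$ by monotonicity of $r$, and (adding non-negative quantities, so that no subtraction of infinities occurs) $PV(x) + W(x) = \PE_x\big[\sum_{k=0}^{\tau_\Cset-1} r(k)\big] + \PE_x[r(\tau_\Cset)] = \PE_x\big[\sum_{k=0}^{\tau_\Cset} r(k)\big] = V(x)$; both statements are exactly \eqref{eq:DoubleControl} restricted to $\Cset^c$. For $x \in \Cset$ we have $V(x)=W(x)=r(0)$, and $PW(x) = \PE_x[r(\tau_\Cset-1)] \le \PE_x[r(\tau_\Cset)] \le b_\star \le W(x) + b$, while $PV(x) = \PE_x\big[\sum_{k=0}^{\tau_\Cset-1} r(k)\big] \le r(0) + \PE_x\big[\sum_{k=1}^{\tau_\Cset} r(k)\big] \le r(0) + b_\star = b = V(x) - W(x) + b$. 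This gives \eqref{eq:DoubleControl}.

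The only point requiring genuine care is the first-step formula for $PV$ and $PW$: the shift identity $\tau_\Cset - 1$ for the post-$\Phi_1$ hitting time, and the handling of the event $\{\tau_\Cset = \infty\}$, on which several quantities are $+\infty$ and the inequalities are to be read in $[1,\infty]$ (or, on $\Cset^c$, replaced by the cleaner identity $PV + W = V$, which avoids any $\infty-\infty$ ambiguity). Everything else is bookkeeping with the monotonicity of $r$ and the standing moment hypothesis.
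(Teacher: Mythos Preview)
Your proof is correct and follows essentially the same first-step/Markov decomposition as the paper. The paper's proof is a bit more indirect: it first rewrites $V(x)=\PE_x\big[\sum_{k=0}^{\sigma_\Cset} W(\Phi_k)\big]$ (using $r(\sigma_\Cset-k)=\PE_{\Phi_k}[r(\sigma_\Cset)]$ for $k\le\sigma_\Cset$) and then applies the generic identity $PF=F-f+\un_\Cset\,\PE_\cdot[\sum_{k=1}^{\tau_\Cset} f(\Phi_k)]$ with $f=W$, whereas you compute $PV$ and $PW$ directly from the shift identity $\sigma_\Cset\circ\theta=\tau_\Cset-1$; both routes are the same idea, and your version is in fact more self-contained since it handles the second inequality for $W$ explicitly.
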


\subsection{Examples}
\label{subsubsec:rate-scale-examples} A phi-irreducible
aperiodic chain satisfying the conditions of
Proposition~\ref{lemma:Drift2Ergod} for a small set $\Cset$, is ergodic at a
subgeometric rate~\cite{douc:fort:moulines:soulier:2004}. 
In that case conditions (\ref{theo:Ergo2Subsample:Cond1}) and
(\ref{theo:Ergo2Subsample:Cond2}) hold with $r(k) = k$ and $W \sim \phi \circ
V$, for which $r^{-1}\left( C \beta^{-1} V/W \right) \sim C \beta^{-1} V / \phi
\circ V$. This yields the following examples of subsampling rate $n$ and the
scale function $W$; hereafter, $c'>0$.

{\em Logarithmically ergodic chains.}  Assume that $\phi(t) \sim c \ [1+\ln
t]^\alpha$ for some $\alpha >0$ and $c >0$. Then (\ref{eq:resulting-drift})
holds with $n(x) \geq c' \ \frac{V(x)}{[1+\ln V(x)]^\alpha}$ and $W \eqdef [1 +
\ln V]^\alpha$.

{\em Polynomially ergodic chains.} Assume that $\phi(t) \sim c t^{1-\alpha}$
for some $\alpha \in (0,1)$ and $c >0$. Then (\ref{eq:resulting-drift}) holds
with $n(x) \geq \ c' V^{\alpha}(x)$ and $W \eqdef V^{1-\alpha}$.

{\em Subgeometrically ergodic chains.} Assume that $\phi(t) \sim c t [\ln
t]^{-\alpha}$ for some $\alpha>0$ and $c >0$. Then (\ref{eq:resulting-drift})
holds with $n(x) \geq \ c' [\ln V(x)]^{\alpha}$ and $W \eqdef V [\ln
V]^{-\alpha}$.

The results above are coherent with the geometric case: on one hand, when a
transition kernel satisfies the drift inequality $PV \leq \beta V + b
\un_\Cset$, then it also satisfies (\ref{eq:FL-drift}) with the same drift
function $V$, and $n(x) =c$ for any constant $c \in \nset_\star$; on the other
hand, when $\alpha \to 0$, the polynomial and subgeometric drift conditions
`tend' to the geometric drift condition (in the sense that $\phi(t) \to t$).
From the above discussion, when $\alpha \to 0$, $[c' V^{\alpha}, V^{1-\alpha}]
\to [c', V]$ and $[c' \, (\ln V)^{\alpha}, V (\ln V)^{-\alpha}] \to [c', V]$,
thus showing coherence in the results.

\section{State-dependent drift criteria for regularity }
\label{sec:Moment}
We now discuss how a state-dependent Foster-Lyapunov drift condition is related
to the existence of a moment of the return time to measurable sets.  Such
controls are related to the {\em regularity} of the chain (\cite[chapter
14]{meyn:tweedie:1993},
\cite{tuominen:tweedie:1994})  
which, under general conditions, is known to imply limit theorems such as
strong laws of large numbers, mean ergodic theorems, functional central limit
theorems and laws of iterated logarithm (see \cite[chapters 14 to
17]{meyn:tweedie:1993}). We provide conditions for the control of
(subgeometric) moments of the return time to a small set, expressed in terms of
a family of nested drift conditions (Proposition~\ref{Drift:sequence}) or in
terms of a single drift condition (Theorem~\ref{theo:subsampled2moments}).

\subsection{Family of nested drift conditions}
Proposition~\ref{Drift:sequence} extends the conditions provided by Tuominen
and Tweedie~\cite{tuominen:tweedie:1994}, expressed in terms of the one-step transition kernel, to the case of the state-dependent
transition kernels. 

\begin{prop}
\label{Drift:sequence}
Let $f : \Xset \to [1,\infty)$ and $n : \Xset \to \nset$ be measurable
functions and $\{r(k), k\geq 0\}$ be a non-negative sequence.  Assume that
there exist measurable functions $\{V_k , k\geq 0\}$ and $\{S_k, k\geq 0\}$,
$V_k,S_k : \Xset \to [1, \infty)$, and a measurable set $\Cset$ such that for
any $k \geq 0$, $x \in \Xset$,
\[
\PE_x \left[ V_{k + n(\Phi_0)}(\Phi_{n(\Phi_0)})\right] \leq V_k(x) - \PE_x
\left[\sum_{j=0}^{n(\Phi_0)-1} r(k+j) f(\Phi_j)\right] + S_k(x) \un_\Cset(x)
\eqsp.
\]
Then for any $x \in \Xset$, $\PE_x \left[ \sum_{k=0}^{\tau_\Cset -1} r(k) \ 
  f(\Phi_k) \right] \leq V_0(x) + S_0(x) \ \un_\Cset(x)$.
\end{prop}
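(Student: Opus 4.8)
The plan is to unfold the hypothesised inequality along the successive subsampling times of the chain and then telescope. First I would introduce the $\F$-stopping times $T_0\eqdef 0$ and $T_{l+1}\eqdef T_l+n(\Phi_{T_l})$ for $l\ge 0$; since $n\ge 1$ these are finite, strictly increasing, with $T_l\ge l$, and $\F_{T_l}\subseteq\F_{T_{l+1}}$. Applying the standing assumption to the chain restarted at $T_l$, with the (countably-valued, $\F_{T_l}$-measurable) index $k$ taken equal to $T_l$, and using the Markov property at $T_l$ together with $T_l+n(\Phi_{T_l})=T_{l+1}$ and $\{T_l+j:0\le j<n(\Phi_{T_l})\}=\{T_l,\dots,T_{l+1}-1\}$, one obtains the one-step bound, valid for every $l\ge 0$,
\[
\PE\!\left[V_{T_{l+1}}(\Phi_{T_{l+1}})\,\big|\,\F_{T_l}\right]\le V_{T_l}(\Phi_{T_l})-\PE\!\left[\sum_{j=T_l}^{T_{l+1}-1}r(j)\,f(\Phi_j)\,\Big|\,\F_{T_l}\right]+S_{T_l}(\Phi_{T_l})\,\un_\Cset(\Phi_{T_l})\eqsp.
\]

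Next I would let $L\eqdef\inf\{l\ge 1:T_l\ge\tau_\Cset\}$, so that $L\ge 1$, $T_{L-1}<\tau_\Cset\le T_L$, and $\{L>l\}=\{T_l<\tau_\Cset\}\in\F_{T_l}$ for each $l$. The role of this choice is twofold: for $1\le l\le L-1$ one has $1\le T_l<\tau_\Cset$, hence $\Phi_{T_l}\notin\Cset$, so the correction term in the displayed bound vanishes for all these $l$, leaving only the $l=0$ term $S_0(x)\un_\Cset(x)$; and the overshoot at $l=L$ contributes only non-negative quantities, so it does no harm. Multiplying the displayed bound by the non-negative $\F_{T_l}$-measurable factor $\un_{\{L>l\}}$, taking $\PE_x$, using that $l\mapsto\{L>l\}$ is decreasing and $V_\bullet\ge 1>0$, summing over $l=0,\dots,m-1$, telescoping, bounding the remaining $V_{T_m}$-term below by zero, and finally letting $m\to\infty$ by monotone convergence, I would arrive at
\[
\PE_x\!\left[\sum_{l=0}^{L-1}\ \sum_{j=T_l}^{T_{l+1}-1}r(j)\,f(\Phi_j)\right]\le V_0(x)+S_0(x)\,\un_\Cset(x)\eqsp.
\]
Since the index blocks $\{T_l,\dots,T_{l+1}-1\}$, $0\le l\le L-1$, partition $\{0,\dots,T_L-1\}$ and $T_L\ge\tau_\Cset$, the left-hand side is at least $\PE_x\big[\sum_{k=0}^{\tau_\Cset-1}r(k)f(\Phi_k)\big]$ (all summands being non-negative), which is the assertion.

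I expect the only genuine subtlety to be the one handled by the definition of $L$. Because the subsampled chain $(\Phi_{T_l})_{l\ge 0}$ may step over $\Cset$ without landing on it, one cannot work with ``the first $l$ with $\Phi_{T_l}\in\Cset$''; stopping instead at the first subsampling index at or past $\tau_\Cset$ is what makes the telescoped correction terms collapse to the single term $S_0(x)\un_\Cset(x)$, while the overshoot $\sum_{k=\tau_\Cset}^{T_L-1}r(k)f(\Phi_k)$ is harmless precisely because $r\ge 0$ and $f\ge 1$. The remaining ingredients --- measurability and finiteness of the $T_l$, $\F_{T_l}$-measurability of $\{L>l\}$, legitimacy of choosing $k=T_l$ in the hypothesis (the index set being countable), and the limiting argument --- are routine.
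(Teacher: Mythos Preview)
Your proof is correct and follows the same telescoping-along-subsampling-times strategy as the paper's, the only difference being the choice of terminal index: the paper stops at $\bar\tau_\Cset\eqdef\inf\{k\ge 1:\Phi_{\tau^k}\in\Cset\}$ rather than your $L=\inf\{l\ge 1:T_l\ge\tau_\Cset\}$. Your remark that ``one cannot work with the first $l$ with $\Phi_{T_l}\in\Cset$'' is not quite right: that choice works too, since for $1\le k<\bar\tau_\Cset$ one has $\Phi_{\tau^k}\notin\Cset$ by definition (so the $S$-terms still vanish), and if $\bar\tau_\Cset=\infty$ the telescoped bound is handled by monotone convergence just as in your argument; your $L$ is simply never larger than $\bar\tau_\Cset$, which makes your version marginally cleaner but not essentially different.
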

However, this criterion is more of theoretical than practical interest since it is quite difficult to check. We now propose a criterion based on a single
drift condition.

\subsection{Single drift condition}
\label{ssec:singleDrift}
We consider the case when
\begin{equation}
  \label{eq:OurDrift-bis}
  \PE_x\left[ W(\Phi_{n(\Phi_0)}) \right] \leq \beta W(x) + b \un_\Cset(x) \eqsp,
\end{equation}
for some $\beta \in (0,1)$ and measurable positive functions $n,W \geq 1$.  The
case $n(x)=c$ on $\Xset$ corresponds to the usual Foster-Lyapunov drift
condition (see e.g. \cite[Chapter 15]{meyn:tweedie:1993} and references
therein).  The drift condition extends earlier work by Meyn and Tweedie
\cite[Theorem 2.1]{meyn:tweedie:1994} who address the cases when the drift
condition is of the form
\begin{align}
  & \PE_x\left[ W(\Phi_{n(\Phi_0)}) \right] \leq W(x) + b \un_\Cset(x) \eqsp, \nonumber \\
  & \PE_x\left[ W(\Phi_{n(\Phi_0)}) \right] \leq W(x) -n(x) + b \un_\Cset(x)\eqsp, \nonumber \\
  & \PE_x\left[ W(\Phi_{n(\Phi_0)}) \right] \leq \beta^{n(x)} W(x) + b
  \un_\Cset(x)\eqsp, \qquad \beta\in(0,1), \label{eq:statedep_geom}
\end{align}
without assuming any relations
between $n$ and $W$. In \cite{meyn:tweedie:1994}, it is established that for a
phi-irreducible and aperiodic kernel $P$, these drift inequalities imply
respectively Harris-recurrence, positive Harris-recurrence and geometric ergodicity
provided the set $\Cset$ is small and $W$ is bounded on $\Cset$. Our drift
inequality (\ref{eq:OurDrift-bis}) differs from (\ref{eq:statedep_geom}) in the
rate of contraction $\beta$ which does not depend on the subsampling rate
$n(x)$. 

\begin{theo}
  \label{theo:subsampled2moments}
  Assume that there exist measurable functions $W : \Xset \to [1,\infty)$ and
  $n : \Xset \to \nset_\star$, constants $\beta \in (0,1)$ and $b<\infty$, and
  a measurable set $\Cset$ such that (\ref{eq:OurDrift-bis}) holds. If there
  exists a strictly increasing function $R: (0,\infty)\rightarrow(0,\infty)$
  satisfying one of the following conditions
  \begin{enumerate}[(i)]
  \item \label{case1} $t \mapsto R(t)/t$ is non-increasing and $R \circ n \leq
    W$,
  \item \label{case2} $R$ is a convex continuously differentiable function such
    that $R'$ is log-concave and $R^{-1}(W) - R^{-1}(\beta W) \geq n$,
  \end{enumerate}
  then there exists a constant $C$ such that $ \PE_x [ R(\tau_\Cset) ] \leq C
  \{W(x) + b\un_\Cset (x) \}.$

\end{theo}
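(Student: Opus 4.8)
The plan is to treat the two cases by different routes.

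\emph{Case (i).} Here $t\mapsto R(t)/t$ non-increasing forces $R$ to be subadditive, $R(s+t)\le R(s)+R(t)$, and the idea is to reduce to Proposition~\ref{Drift:sequence}. I would take $f\equiv1$, the rate sequence $r(k):=R(k+1)-R(k)>0$ (so the inner sum telescopes: $\sum_{j=0}^{m-1}r(k+j)=R(k+m)-R(k)$, and $\sum_{k=0}^{m-1}r(k)=R(m)-R(0)$), and the nested drift functions $V_k:=\tfrac{2}{1-\beta}\,W+R(k)$ and $S_k:=1+\tfrac{2b}{1-\beta}$. Combining \eqref{eq:OurDrift-bis} with the two observations that, by subadditivity and $R\circ n\le W$, one has $R(k+n(x))-R(k)\le R(n(x))\le W(x)$ for every $k$, and that $(1-\beta)\tfrac{2}{1-\beta}W(x)=2W(x)$ is precisely what absorbs $2\bigl(R(k+n(x))-R(k)\bigr)$ off $\Cset$, one checks the hypothesis of Proposition~\ref{Drift:sequence}. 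That proposition then gives $\PE_x[R(\tau_\Cset)]=R(0)+\PE_x\bigl[\sum_{k=0}^{\tau_\Cset-1}r(k)\bigr]\le R(0)+V_0(x)+S_0(x)\un_\Cset(x)$, and since $R(0)\le R(1)\le R(n(x))\le W(x)$ for every $x$, this rearranges to the claimed bound.

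\emph{Case (ii).} I would argue directly on the deterministic subsampling skeleton $T_0:=0$, $T_{k+1}:=T_k+n(\Phi_{T_k})$; each $T_k$ is a stopping time, so by the strong Markov property \eqref{eq:OurDrift-bis} reads $\PE[W(\Phi_{T_{k+1}})\mid\mathcal{F}_{T_k}]\le\beta W(\Phi_{T_k})+b\un_\Cset(\Phi_{T_k})$. Set $N:=\inf\{k\ge1:\Phi_{T_k}\in\Cset\}$, so that $\tau_\Cset\le T_N$ (because $\Phi_{T_N}\in\Cset$). The key point is that, for every constant $c\ge0$, the function $H_c(w):=R\bigl(c+R^{-1}(w)\bigr)$ is concave and increasing in $w$: writing $v=R^{-1}(w)$ one computes $H_c'(w)=R'(c+v)/R'(v)$, and this is non-increasing in $w$ precisely because $\ln R'$ is concave, so $\ln R'(c+v)-\ln R'(v)$ decreases in $v$. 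Hence, for any integer $n_0\ge0$ and any $x\notin\Cset$, the process $\Psi_k:=R\bigl(n_0+T_k+R^{-1}(W(\Phi_{T_k}))\bigr)$ stopped at $N$ is a $\PP_x$-supermartingale: Jensen's inequality for $H_{n_0+T_k+n(\Phi_{T_k})}$, then \eqref{eq:OurDrift-bis}, then the hypothesis $n\le R^{-1}(W)-R^{-1}(\beta W)$ and monotonicity of $R$ give
\[
\PE[\Psi_{k+1}\mid\mathcal{F}_{T_k}]\le R\bigl(n_0+T_{k+1}+R^{-1}(\beta W(\Phi_{T_k}))\bigr)\le\Psi_k\qquad\text{on }\{k<N\}.
\]
Since $\Psi_{k\wedge N}\ge R(k\wedge N)$ and $R(k)\to\infty$, optional stopping ($\PE_x[\Psi_{k\wedge N}]\le\Psi_0$) forces $\PP_x(N=\infty)=0$, and Fatou's lemma then yields, for all $n_0\ge0$ and $x\notin\Cset$, $\PE_x[R(n_0+\tau_\Cset)]\le R\bigl(n_0+R^{-1}(W(x))\bigr)$; taking $n_0=0$ gives the asserted estimate off $\Cset$.

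It remains to treat $x\in\Cset$ in case (ii). Conditioning at the deterministic time $n(x)$: on $\{\tau_\Cset\le n(x)\}$, $R(\tau_\Cset)\le R(n(x))\le W(x)$; on $\{\tau_\Cset>n(x)\}$ one has $\Phi_{n(x)}\notin\Cset$ and $\tau_\Cset=n(x)+\tau_\Cset\circ\theta_{n(x)}$, so the ordinary Markov property, the estimate just proved (with shift $n_0=n(x)$, starting from $\Phi_{n(x)}\notin\Cset$), then Jensen for $H_{n(x)}$ and \eqref{eq:OurDrift-bis} bound the contribution by $R\bigl(n(x)+R^{-1}(\beta W(x)+b)\bigr)$. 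Finally, $n(x)\le R^{-1}(W(x))-R^{-1}(\beta W(x))$ and concavity of $R^{-1}$ give $n(x)+R^{-1}(\beta W(x)+b)\le R^{-1}(W(x))+c_1$ for a constant $c_1$, while convexity of $R$ and log-concavity of $R'$ yield $R'(u)\le c_2R(u)$ for large $u$ (since $R(u)\ge R'(u-1)$ and $R'(u)/R'(u-1)\le R'(1)/R'(0)$), hence $R(u+c_1)\le C_3R(u)$; applying this at $u=R^{-1}(W(x))$ completes the estimate. I expect the genuine obstacles to be the concavity lemma for $H_c$ — this is exactly where log-concavity of $R'$ is used — together with the boundary estimate for $x\in\Cset$; some routine care is also needed with the domain of $R^{-1}$, which is harmless since $W\ge1$ keeps its arguments bounded away from $0$.
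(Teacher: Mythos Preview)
Your proof is correct in both cases, and the core analytic input --- subadditivity of $R$ in case~(\ref{case1}), concavity of $w\mapsto R(c+R^{-1}(w))$ from log-concavity of $R'$ in case~(\ref{case2}) --- is exactly what the paper uses. The packaging, however, is essentially \emph{reversed} relative to the paper. For case~(\ref{case1}) the paper works directly on the subsampling skeleton: with $\bar\tau_\Cset=\inf\{k\ge1:\Phi_{\tau^k}\in\Cset\}$, subadditivity gives $R(\tau_\Cset)\le\sum_{k<\bar\tau_\Cset}R\circ n(\Phi_{\tau^k})\le\sum_{k<\bar\tau_\Cset}W(\Phi_{\tau^k})$, and the Comparison Theorem closes the argument in two lines, with no appeal to Proposition~\ref{Drift:sequence}. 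For case~(\ref{case2}) the paper \emph{does} go through Proposition~\ref{Drift:sequence}, taking $V_k=H_k\circ W$ with $H_k(t)=R(R^{-1}(t)+k)-R(k)$ and $r(k)=R'(k)$; your $H_c$ is this same $H_k$ up to the additive constant $R(k)$, but you run the supermartingale argument by hand on the skeleton instead. The price of your route is the separate boundary analysis for $x\in\Cset$ and the auxiliary estimate $R(u+c_1)\le C_3R(u)$, which the paper avoids because the term $H_{k+n(x)}(b)\un_\Cset$ simply becomes the $S_k\un_\Cset$ in Proposition~\ref{Drift:sequence}. Both approaches are valid; the paper's treatment of~(\ref{case1}) is more economical, while your direct supermartingale for~(\ref{case2}) is arguably more transparent than verifying the nested inequality $\mathcal{R}_k\le0$.
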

The conclusion of Theorem~\ref{theo:subsampled2moments} is unchanged if $R$ is
modified on some bounded interval $[0,t]$: hence it is sufficient to define $R$
such that the above conditions on $(R,n,W)$ hold for any $x$ such that $n(x)$
-- or equivalently $W(x)$ -- is large enough.  We provide at the end of this
section examples of pairs $(n,W)$ and the associated rate $R$.

A sufficient condition for the existence of $R$ satisfying (\ref{case1}) is
that, outside some level set of $n$, there exists a strictly increasing concave
function $\xi$ such that $\xi \circ n =W$. Then we can set $R= \xi$.  Since
concave functions are sub-linear, the case (\ref{case1}) addresses the case
when $n \gg W$ (outside some bounded set). A sufficient condition for the
existence of a function $R$ satisfying $R^{-1}(W) -R^{-1}(\beta W) \geq n$ is
that $(1-\beta) \; W(x) \; [R^{-1}]'(W(x)) \geq n(x)$; when there exists $\xi$
such that $n(x) = \xi \circ W(x)$ and $t \mapsto \xi(t)/t$ is non-increasing,
we can choose $R^{-1}(t) \sim \int_1^t u^{-1} \xi(u) du$. Hence case
(\ref{case2}) addresses the case when $n/W$ is decreasing (outside some bounded
set).

Existence of an invariant probability distribution is related to $\PE_x[
\tau_\Cset]$, the first moment of the return time to a small set $\Cset$
(\cite[Theorem 10.0.1]{meyn:tweedie:1993}).
Theorem~\ref{theo:subsampled2moments}(\ref{case1}) shows that the control of
this moment can be deduced from a condition of the form (\ref{eq:OurDrift-bis})
provided $n(x) \leq W(x)$ (choose $R(t) = t$).

Given $(n,W)$ and a drift inequality of the form (\ref{eq:OurDrift-bis}),
Theorem~\ref{theo:subsampled2moments} provides a moment of the return time to
$\Cset$ which depends upon the initial value $x$ at most as $W(x)$ (outside
$\Cset$). From the drift inequality (\ref{eq:OurDrift-bis}), we are able to
deduce a family of similar drift conditions with $n$ unchanged: for example,
Jensen's inequality implies that for any $\alpha \in (0,1)$,
\[
\PE_x \left[ W^\alpha(\Phi_{n(\Phi_0)})\right] \leq \beta^\alpha W^\alpha(x) + b^\alpha \un_\Cset(x) \eqsp.
\]
Application of Theorem~\ref{theo:subsampled2moments} with this new pair $(n,
W^\alpha)$ , will allow the control of a moment of $\tau_\Cset$ which depends
on $x$ at most as $W^\alpha(x)$.

\begin{coro}[of Theorem~\ref{theo:subsampled2moments}]
\label{coro:Pi-integ}
Assume in addition that $P$ is phi-irreducible and aperiodic, $\Cset$ is small
with $\sup_\Cset W < \infty$, and $R$ is a subgeometric rate.
  \begin{enumerate}[(i)]
  \item For any accessible set $\Dset$, there exists $C<\infty$ such that
    $\PE_x\left[ R(\tau_\Dset) \right] \leq C \ W(x)$;
  \item \label{cor:cond2} If $P$ admits a unique invariant probability measure
    $\pi$ such that $\pi(W)< \infty$, there exists an accessible small set
    $\Dset$ such that $\sup_{x \in \Dset} \PE_x\left[ \sum_{k=0}^{\tau_\Dset
        -1} R(k) \right] < \infty$.
  \end{enumerate}
\end{coro}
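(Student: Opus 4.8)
The plan is to deduce both statements from Theorem~\ref{theo:subsampled2moments} combined with the standard regularity theory for phi-irreducible chains as developed in \cite[Chapter 14]{meyn:tweedie:1993} and \cite{tuominen:tweedie:1994}. Under the added hypotheses, Theorem~\ref{theo:subsampled2moments} already gives a constant $C$ such that $\PE_x[R(\tau_\Cset)] \leq C\{W(x) + b\un_\Cset(x)\}$ for the specific small set $\Cset$ appearing in the drift condition~(\ref{eq:OurDrift-bis}); since $\sup_\Cset W < \infty$, this in particular yields $\sup_{x\in\Cset}\PE_x[R(\tau_\Cset)] < \infty$, so $\Cset$ is an $(f,R)$-regular small set in the terminology of \cite{tuominen:tweedie:1994} (with $f \equiv 1$). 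The key mechanism for propagating the return-time bound from $\Cset$ to an arbitrary accessible set $\Dset$ is the comparison/cycle argument: because $R$ is a subgeometric rate, the class $\Lambda$ is closed under the operations needed (there is a constant $c_R$ with $R(j+k)\leq c_R R(j)R(k)$ up to lower-order terms, or more conveniently one works with a rate $r_0\in\Lambda_0$ equivalent to $R$), one can split an excursion from $x$ to $\Dset$ at its successive visits to $\Cset$ and sum the per-excursion bounds. This is exactly the content of \cite[Theorem 14.2.3]{meyn:tweedie:1993} adapted to general subgeometric rates as in \cite[Theorem 2.1 and Section 4]{tuominen:tweedie:1994}; invoking it gives (i).

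For part~(i) concretely, I would first fix any accessible small set $\Dset$. By phi-irreducibility and aperiodicity there is $m\geq 1$ and $\delta>0$ with $P^m(x,\cdot)\geq \delta\un_\Dset(\cdot)\,\nu(\cdot)$ for $x\in\Cset$ (after possibly shrinking $\Cset$, which is harmless by the remark following the theorem statement that $R$ may be altered on a bounded interval). Starting the chain at an arbitrary $x$, let $T_1<T_2<\cdots$ be the successive return times to $\Cset$; the first bound controls $\PE_x[R(T_1)]$ by $C W(x)$, and $\sup_{z\in\Cset}\PE_z[R(T_{k+1}-T_k)] =: M<\infty$ controls each subsequent inter-visit time. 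The number $N$ of visits to $\Cset$ before hitting $\Dset$ is geometrically bounded (each visit to $\Cset$ leads to $\Dset$ within $m$ steps with probability at least $\delta\nu(\Dset)>0$, uniformly), so $\PE[s^N]<\infty$ for some $s>1$. Combining these through the subgeometric submultiplicativity of $R$ — i.e. $R\bigl(\sum_{k\le N}(T_k-T_{k-1})\bigr)$ is dominated, in expectation, by a sum of at most $N$ terms each of which is an $R$-moment of an inter-visit time, times the combinatorial factor $c_R^N$ absorbed by the geometric tail of $N$ — yields $\PE_x[R(\sigma_\Dset)]\leq C' W(x)$, and hence $\PE_x[R(\tau_\Dset)]\leq C'' W(x)$ after one more step. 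The main obstacle here is the bookkeeping in this excursion decomposition: making the submultiplicativity constant interact correctly with the geometric tail of $N$, and handling the fact that $R\in\Lambda$ rather than being exactly submultiplicative, which is precisely why one passes to an equivalent $r_0\in\Lambda_0$ and uses the defining property $\ln r_0(n)/n\downarrow 0$. This is routine given the cited results, so I would simply cite \cite[Theorem 14.2.3]{meyn:tweedie:1993} / \cite{tuominen:tweedie:1994} rather than reprove it.

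For part~(ii), suppose $P$ has a unique invariant probability $\pi$ with $\pi(W)<\infty$. Integrating the bound from part~(i) (or directly from Theorem~\ref{theo:subsampled2moments}) against $\pi$ shows that $\PE_x[R(\tau_\Cset)]$ is $\pi$-integrable, so the chain is $(1,R)$-regular in the mean. By the theory of \cite[Chapter 14]{meyn:tweedie:1993} (specifically the equivalences around Theorems~14.2.3 and 14.2.11, extended to subgeometric rates in \cite{tuominen:tweedie:1994}), $\pi$-a.e.\ regularity upgrades to the existence of a single accessible small set $\Dset$ which is $(1,R)$-regular \emph{uniformly}, i.e.\ $\sup_{x\in\Dset}\PE_x\bigl[\sum_{k=0}^{\tau_\Dset-1} R(k)\bigr]<\infty$; one takes $\Dset$ to be a sub-level set of the function $x\mapsto \PE_x\bigl[\sum_{k=0}^{\tau_\Cset-1}R(k)\bigr]$ intersected with a small set, using that this function is finite $\pi$-a.s.\ and hence finite on a set of positive $\pi$-measure, which must be accessible. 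Note the minor discrepancy between the $R(\tau_\Cset)$-moment produced by the theorem and the modulated sum $\sum_{k<\tau_\Cset}R(k)$ appearing in the conclusion; these are comparable up to constants when $R\in\Lambda$ (again via $r_0\in\Lambda_0$), since $\sum_{k=0}^{\tau-1}R(k)\leq \tau\,R(\tau)$ and, more usefully, $\PE_x[\sum_{k<\tau_\Cset}R(k)]\leq C\,\PE_x[R(\tau_\Cset)]$ for a subgeometric rate — this comparison, and the passage between "return-time moment" and "modulated return-time moment" formulations, is the only genuinely subgeometric-specific point and I would isolate it as the single technical lemma to verify carefully, everything else being a direct appeal to the cited regularity theorems.
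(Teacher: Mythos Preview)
Your treatment of (i) is fine and matches the paper: both reduce to the standard comparison lemma \cite[Lemma 3.1]{nummelin:tuominen:1983} (equivalently \cite[Proposition 3.1]{tuominen:tweedie:1994}). Part (ii), however, has a genuine gap. The inequality you isolate as the one technical point to verify,
\[
\PE_x\Bigl[\sum_{k=0}^{\tau_\Cset-1} R(k)\Bigr] \;\leq\; C\,\PE_x\bigl[R(\tau_\Cset)\bigr],
\]
is \emph{false} for subgeometric rates in general: for $R(k)=k^\alpha$ the left side is of order $\PE_x[\tau_\Cset^{\alpha+1}]$ while the right is $\PE_x[\tau_\Cset^\alpha]$, and no constant closes this. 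The modulated sum $\sum_{k<\tau}R(k)$ is one full order of integrability stronger than the moment $R(\tau)$; submultiplicativity of rates in $\Lambda$ does not collapse that gap. Consequently, integrating $\PE_x[R(\tau_\Cset)]$ against $\pi$ and passing to a sublevel set cannot deliver the stated conclusion.

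The paper's route uses $\pi(W)<\infty$ in a different and more substantive way. By \cite[Theorem 14.2.11]{meyn:tweedie:1993} there exist sets $A_n$ whose union is full with $\sup_{x\in A_n}\PE_x\bigl[\sum_{k=0}^{\tau_\Dset-1}W(\Phi_k)\bigr]<\infty$ for every accessible $\Dset$; intersect an accessible small set with a suitable $A_n$ to obtain $\tilde\Dset$. Part (i) is then fed back pathwise via
\[
\PE_x\Bigl[\sum_{k=1}^{\tau_{\tilde\Dset}} R(k)\Bigr]
=\PE_x\Bigl[\sum_{j=0}^{\tau_{\tilde\Dset}-1} R(\tau_{\tilde\Dset}-j)\Bigr]
\leq C\,\PE_x\Bigl[\sum_{j=0}^{\tau_{\tilde\Dset}-1} W(\Phi_j)\Bigr],
\]
since on $\{j<\tau_{\tilde\Dset}\}$ the residual time $\tau_{\tilde\Dset}-j$ is a return time from $\Phi_j$, so $\PE[R(\tau_{\tilde\Dset}-j)\mid\mathcal{F}_j]\leq C\,W(\Phi_j)$ by (i). The idea missing from your argument is exactly this conversion: $\pi(W)<\infty$ buys control of the \emph{path sum} $\sum_j W(\Phi_j)$ along the excursion, and (i) turns each $W(\Phi_j)$ into the $R$-moment of the residual time, which is what recovers the extra order.
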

Examples of moments that can be obtained from
Theorem~\ref{theo:subsampled2moments} (possibly combined with
Corollary~\ref{coro:Pi-integ}) are the geometric, subgeometric, polynomial and
logarithmic rates.

{\em Geometric rates.} If $n(x)=1$: setting $R^{-1}(t) = \ln(t)/\ln(\kappa)$
with $1 \leq \kappa\leq\beta^{-1}$ it is easily verified that condition
\eqref{case2} of the theorem is satisfied. We therefore deduce that $\PE_x
[R(\tau_\Cset)] = \PE_x [ \kappa^{\tau_\Cset} ] \leq C \{W(x) + b\un_\Cset \}$.
In particular, $\PE_x [ \beta^{-\tau_\Cset} ]<\infty$, in agreement with the
well-known equivalence between the geometric drift condition and the
exponential moment of $\tau_\Cset$ mentioned in Section~\ref{sec:intro}.

{\em Subgeometric rates.}  If $n(x) \propto [\ln V(x)]^{\alpha}$ for some
$\alpha >0$ and $W \propto V[\ln V]^{-\alpha}$: then $n(x) \sim \xi \circ
W(x)$ with $\xi(t) \sim [\ln t]^\alpha$. For some convenient $c$, the function
$R(t) \sim \exp(c t^{1/(1+\alpha)})$ satisfies the condition $[R^{-1}]'(t) \sim
\xi(t)/[(1-\beta) t]$ and also condition~\eqref{case2} of
Theorem~\ref{theo:subsampled2moments}.

{\em Polynomial rates.}  If $n(x) \propto V^\alpha(x)$, for some $\alpha \in
(0,1]$ and $W \propto V^{1-\alpha}$: when $\alpha \leq 1/2$ (respectively
$\alpha \geq 1/2$) condition \eqref{case2} (resp. condition \eqref{case1}) of
Theorem~\ref{theo:subsampled2moments} is satisfied with $R(t) \sim
t^{(1-\alpha)/\alpha}$. We thus have $\PE_x \left[ \tau_\Cset^{1/\alpha
    -1}\right] \leq C \ V^{1-\alpha}(x) $.

{\em Logarithmic rates.}  If $n(x) \propto V [\ln V(x)]^{-\alpha}$ for some
$\alpha >0$ and $W \propto [\ln V]^{\alpha}$. Then $n\gg W$ and condition
\eqref{case1} is verified with $R(t) \sim [\ln t]^\alpha$.  Hence $\PE_x \left[
  (\ln \tau_\Cset)^\alpha\right] \leq C \ [1+\ln V]^{\alpha}(x) $.

As an application of Corollary~\ref{coro:Pi-integ} and of the discussion in
Section~\ref{subsubsec:rate-scale-examples}, we can deduce moments of the
return-time to $\Cset$ from a single drift condition of the form $PV \leq V
-\phi \circ V + b \un_\Cset$, $\phi$ concave. For example, in the case $\phi(t)
\sim t^{1-\alpha}$ for some $\alpha \in (0,1)$, we have
$\PE_x[\tau_\Cset^{1/\alpha}] \leq C V(x)$. This is in total agreement with that
which has been established in the literature using other approaches
\cite{jarner:roberts:2002,fort:moulines:2003,douc:fort:moulines:soulier:2004}.
This agreement illustrates the fact that the sufficient conditions provided in
Sections~\ref{sec:Tame} and \ref{sec:Moment} are quite minimal.

\section{Application to tame chains}
\label{sec:applications}

\subsection{Tame chains}\label{subsec:tame}
The class of \emph{tame} Markov chains was introduced by Connor and
Kendall~\cite{Connor.Kendall-2007}, who showed that a perfect simulation
algorithm exists for such chains.

\begin{defi}\label{defi:tame}
The chain $\Phi$ is \emph{tame} if there exists a scale function $W:\Xset\to[1,\infty)$, a small set $\Cset$, and constants $\beta\in(0,1)$, $b<\infty$ such that the following two conditions hold:
\begin{enumerate}[(i)]
\item \label{def1}
there exist $\delta\in(0,1)$ and a deterministic function $n:\Xset\to\nset$ satisfying 
\begin{align}
  n(x) & \leq  W^\delta(x)  \label{eq:nBound} \\
\text{such that} \qquad  \PE_x\left[ W(\Phi_{n(x)}) \right] &\leq \beta W(x) + b \un_\Cset(x) \eqsp;   \label{eq:OurDrift}
\end{align}
\item \label{def2} the constant $\delta$ in \eqref{eq:nBound} satisfies $\ln \beta < \delta^{-1}\ln (1-\delta)$.
\end{enumerate}
\end{defi}

In other words, $\Phi$ is tame if for all $x\in\Xset$ there exists a deterministic time $n(x)$ such
that the chain subsampled at this time exhibits a geometric drift condition
\eqref{eq:OurDrift}. Furthermore, $n(x)$ should be sufficiently small compared
to the scale function $W$ \eqref{eq:nBound}. Part
(\ref{def2}) of Definition~\ref{defi:tame} is a technical condition required
for construction of the simulation algorithm.

Clearly the class of tame chains includes all geometrically ergodic chains. In addition, it is shown in \cite{Connor.Kendall-2007} that $\Phi$ is tame if it satisfies a polynomial drift condition of the form 
\begin{equation}
  \label{eq:polyDrift}
  PV \leq V-cV^{(1-\alpha)}+b \un_C \,,
\end{equation} 
where $0<\alpha<1/4$. 
However, this condition is not necessary: in \cite{Connor.Kendall-2007} there is an example of a random walk satisfying
\eqref{eq:polyDrift} with $\alpha=1/2$, which is explicitly shown to be tame. The results of Section~\ref{subsubsec:sufficient-conditions} now enable us to generalise this sufficient condition.

\begin{prop}
  \label{prop:tameness}
  Suppose that $\Phi$ satisfies the assumptions of
  Proposition~\ref{lemma:Drift2Ergod} with $\phi(t)\sim c t^{1-\alpha}$
  where $\alpha \in (0,1/2)$. Then $\Phi$ is tame.
\end{prop}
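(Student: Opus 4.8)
The plan is to verify the two defining conditions of Definition~\ref{defi:tame} directly, by feeding the polynomial drift hypothesis through the machinery of Section~\ref{subsubsec:sufficient-conditions}. First I would invoke Proposition~\ref{lemma:Drift2Ergod}: since $\Phi$ satisfies $PV \leq V - \phi\circ V + b\un_\Cset$ with $\phi(t)\sim ct^{1-\alpha}$ concave and $\sup_\Cset V<\infty$, we obtain the double control \eqref{eq:DoubleControl} with $W\propto \phi\circ V \propto V^{1-\alpha}$. Next, because a chain satisfying the hypotheses of Proposition~\ref{lemma:Drift2Ergod} for a small set is phi-irreducible and aperiodic (and hence the conditions of Section~\ref{subsubsec:sufficient-conditions} apply), Proposition~\ref{prop:TwoMoments} gives both $\sup_{n\geq0}P^nW(x_0)<\infty$ for some $x_0$ and the rate estimate $n\,\|P^n(x,\cdot)-P^n(x',\cdot)\|_W \leq C\{V(x)+V(x')\}$. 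So assumptions \eqref{theo:Ergo2Subsample:Cond1}--\eqref{theo:Ergo2Subsample:Cond2} of Theorem~\ref{theo:Ergo2Subsample} hold with $r(k)=k$.

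Applying Theorem~\ref{theo:Ergo2Subsample} with this $r$, for any fixed $\beta\in(0,1)$ we may take $n(x) \geq r^{-1}(C\beta^{-1}V(x)/W(x)) = \lceil C\beta^{-1} V(x)/W(x)\rceil$ and conclude that the subsampled drift \eqref{eq:resulting-drift} holds, i.e. $P^{n(x)}W \leq \beta' W + b\un_\Cset$ for the small set $\Cset=\{W\leq b(\beta'-\beta)^{-1}\}$. This is exactly the geometric subsampled drift \eqref{eq:OurDrift} required by part~(\ref{def1}) of Definition~\ref{defi:tame}, with scale function $W\propto V^{1-\alpha}$. It remains to check the size constraint \eqref{eq:nBound}, namely that $n(x)\leq W^\delta(x)$ for a suitable $\delta\in(0,1)$. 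Since $W \propto V^{1-\alpha}$, we have $V/W \propto V^{\alpha} \propto W^{\alpha/(1-\alpha)}$, so $n(x)$ can be chosen of order $W^{\alpha/(1-\alpha)}(x)$ (up to the ceiling, which only inflates by a bounded factor outside a bounded set, and the conclusion of the theorems is insensitive to modifying $n$ on a level set). Thus \eqref{eq:nBound} holds with $\delta = \alpha/(1-\alpha)$, and the constraint $\alpha\in(0,1/2)$ guarantees $\delta\in(0,1)$.

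Finally I would verify condition~(\ref{def2}): $\ln\beta < \delta^{-1}\ln(1-\delta)$. Here is the one genuine degree of freedom we have not yet spent — the contraction rate $\beta$ in Theorem~\ref{theo:Ergo2Subsample} is a \emph{free} parameter (choosing $\beta$ smaller only forces $n(x)$ larger by a constant factor, which is harmless since $\delta$ is unchanged). So I would simply pick $\beta$ small enough that $\ln\beta$ falls below the fixed threshold $\delta^{-1}\ln(1-\delta)<0$; since $\delta=\alpha/(1-\alpha)$ is determined by $\alpha$ alone, this threshold is a fixed negative number and any sufficiently small $\beta$ works. I expect the main obstacle — really the only point needing care — to be this interplay between the parameters: one must make sure that shrinking $\beta$ to satisfy~(\ref{def2}) does not retroactively spoil the bound $n(x)\leq W^\delta(x)$, and the key observation is that the exponent $\delta$ depends only on the exponent $1-\alpha$ relating $W$ and $V$, not on $\beta$, so the order of quantifiers is: fix $\alpha$, hence $\delta$; then choose $\beta$; then choose $n(x)\asymp \beta^{-1}W^\delta(x)$, absorbing the constant $\beta^{-1}$ into the small set threshold so that \eqref{eq:nBound} holds for all $x$ after enlarging $\Cset$ if necessary.
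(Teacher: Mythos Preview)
Your route is the paper's route: feed the polynomial drift through Proposition~\ref{lemma:Drift2Ergod}, Proposition~\ref{prop:TwoMoments}, and Theorem~\ref{theo:Ergo2Subsample} to obtain the subsampled drift with $W\propto V^{1-\alpha}$ and $n(x)\propto \beta^{-1}V^{\alpha}(x)$, then exploit the freedom in $\beta$ to secure condition~(\ref{def2}).

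There is, however, one genuine slip. You take $\delta=\alpha/(1-\alpha)$ \emph{exactly}, and then propose to absorb the multiplicative constant $c_\beta=C\beta^{-1}$ by ``enlarging $\Cset$''. That cannot work: with $\delta=\alpha/(1-\alpha)$ the ratio $n(x)/W^\delta(x)\asymp c_\beta$ is \emph{constant} in $x$, so the constraint \eqref{eq:nBound} fails identically as soon as $c_\beta>1$ --- and since \eqref{eq:nBound} is a global requirement, no choice of $\Cset$ rescues it. Worse, shrinking $\beta$ (which you must do for condition~(\ref{def2})) only inflates $c_\beta$. The paper's remedy is to take $\delta$ \emph{strictly} greater than $\alpha/(1-\alpha)$, which is possible precisely because $\alpha<1/2$ forces $\alpha/(1-\alpha)<1$. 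Then $c_\beta W^{\alpha/(1-\alpha)}(x)\le W^\delta(x)$ holds outside a level set of $W$ (since $W^{\delta-\alpha/(1-\alpha)}\to\infty$), and on that level set one may set $n(x)=1\le W^\delta(x)$. With this single adjustment your order of quantifiers is exactly the paper's: first fix any $\delta\in(\alpha/(1-\alpha),1)$, then choose $\beta$ small enough that $\ln\beta<\delta^{-1}\ln(1-\delta)$.
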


\begin{proof}
  Choose $\delta\in(0,1)$ such that $\delta>\alpha/(1-\alpha)$, and then
  $\beta\in(0,1)$ such that $\ln\beta < \delta^{-1}\ln(1-\delta)$. As noted in
  Section~\ref{subsubsec:rate-scale-examples}, the results of
  Proposition~\ref{lemma:Drift2Ergod} and Theorem~\ref{theo:Ergo2Subsample}
  show that if $\phi(t)\sim ct^{1-\alpha}$ for $\alpha<1/2$, then $ \PE_x\left[
    W(\Phi_{n(x)}) \right] \leq \beta W(x) + b \un_\Cset(x)$ with
  $\displaystyle{ n(x) = c_\beta \ V^{\alpha}(x) \eqsp, \eqsp W = V^{1-\alpha}
    \eqsp,}$ and where $c_\beta \propto \beta^{-1}$. The choice of $\delta$ and $\beta$ ensures that $\Phi$ satisfies all parts of Definition~\ref{defi:tame}, as required.
\end{proof}

Note that any chain with subgeometric drift $\phi(t) \sim c t [\ln
t]^{-\alpha}$ is tame. However, the logarithmically ergodic chains identified
in Section~\ref{subsubsec:rate-scale-examples} do not satisfy $\phi(t)>t^\alpha$ for any value of $\alpha\in(0,1)$, and
so are not covered by Proposition~\ref{prop:tameness}.


\subsection{Dominating process for tame chains }
\label{sec:applications-2}
In this section we describe a non-trivial example of a Markov chain $D$ for
which there is no obvious one-step drift, but for which it is simple to
establish a sub-sampled drift condition. The chain $D$ finds application in the
perfect simulation algorithm of
\cite{Connor.Kendall-2007}, as will be explained below.

Let $\beta \in (0, e^{-1})$, $\kappa>0$ and $n^\star$ be a function from $[1,
\infty) \to \nset$. To begin our construction of $D$, let $U$ be the system
workload of a $D/M/1$ queue, sampled just before arrivals, with arrivals every
$\ln \beta^{-1}$ units of time, and service times being independent and of unit
rate Exponential distribution. This satisfies
\[ U_{n+1} = \max\left\{U_n + E_{n+1} - \ln \beta^{-1},\,0\right\} \,, \]
where $\{E_n\}_{n\geq 1}\iid \textup{Exp}(1)$.

Define $Y = \kappa\exp(U)$, for some $\kappa>0$. The set
$[\kappa,\kappa/\beta]$ is a small set for $Y$, and 
\begin{equation}
  \label{eq:Y-density}
  \PP[Y_1>v \;|\; Y_0=u] = \frac{\beta u}{v}\,, \quad\text{for $v\geq\max\{\beta u, \kappa \}$.}
\end{equation}

Finally, let $D$ be the two-dimensional process $D=(Z,M)$ on $\Xset \eqdef \{(z,m): z \in [1, \infty), m \in \{1, \dots, n^\star(z)\} \}$ with transitions controlled by:
    \begin{eqnarray*}\label{eqn:(Y,N)-trans-probs}
        && \PP[Z_{k+1}=Z_k,\, M_{k+1}=M_k-1 \;|\; Z_k,\, M_k] = 1\;,
        \quad\text{if $M_k\geq 2$\,;} \\
        && \PP[Z_{k+1}\in E \;|\; Z_k=z,\, M_k=1] = \PP[Y_1\in E \;|\;
        Y_0=z]\,, \\
        && \hspace{5.5cm} \text{for all measurable $E\subseteq [1,\infty)$;} \\
        && \PP[M_{k+1}=n^*(Z_{k+1}) \;|\; Z_k,\, Z_{k+1}\,, M_k=1] = 1\,.
    \end{eqnarray*}
   Thus the first component of $D$ is
      simply a slowed down version of $Y$, and the second component is a
      forward recurrence time chain, counting down the time until the first
      component jumps (determined by the function $n^*$).
      
\begin{prop}
      \label{prop:tame-domination}
      Let $\beta \in (0,e^{-1})$, $\kappa>0$ and $n^*$ be a measurable function
      $n^\star: [1, \infty) \to\nset$. Set $\Cset \eqdef \{(z,m): z \in
      [\kappa, \kappa/\beta], m \in \{1, \cdots, n^\star(z) \} \}$ and denote
      by $\tau_\Cset$ the return time to $\Cset$ for the Markov chain $D$. Let
      $\alpha_\beta$ be the unique solution in $(0,1)$ of the equation $\ln
      \beta = \ln(1-\alpha)/\alpha$.
      \begin{enumerate}[(i)]
      \item When $n^\star(z) \sim z^{\gamma}$ for some $\gamma \geq 0$ : for
        any $ \alpha \in (0, \alpha_\beta)$ and
        \begin{itemize}
        \item any $\eta \in (\gamma/\alpha,1]$ when $\gamma \in [0, \alpha_\beta)$
        \item any $\eta > \gamma/\alpha$ when $\gamma \geq \alpha_\beta$,
        \end{itemize}
        there exists a constant $C$ such that $\PE_{(z,m)}[\tau_\Cset^{1/\eta}]
        \leq C \, z^{\alpha}$ for any $(z,m) \in \Xset$.
      \item When $n^\star(z) \sim [\ln z]^\gamma$ for some $\gamma > 0$ : for
        any $ \alpha \in (0, \alpha_\beta)$ and $\eta>0$ satisfying
        $\eta < \{ (1+\gamma)\alpha^{-1} \ln((1-\alpha)/\beta^\alpha)\}^{1/(1+\gamma)}$, there
        exists a constant $C$ such that $\PE_{(z,m)}[\exp (\eta \alpha
        \tau_\Cset^{1/(1+\gamma)})] \leq C \, z^{\alpha}$ for any $(z,m) \in
        \Xset$.
      \item When $n^\star(z) \sim 1$, for any $ \alpha \in (0, \alpha_\beta)$
        there exists a constant $C$ such that for any $(z,m) \in \Xset$,
        $\PE_{(z,m)}[ \left\{(1-\alpha) \beta^{-\alpha} \right\}^{\tau_\Cset}] \leq C \,
      z^{\alpha}$
      \end{enumerate}
  \end{prop}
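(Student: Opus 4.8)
The plan is to establish a subsampled geometric drift of the form \eqref{eq:OurDrift-bis} for the Markov chain $D=(Z,M)$ and then read off each of the three moment bounds by an appropriate choice of rate function $R$ in Theorem~\ref{theo:subsampled2moments}. The natural deterministic subsampling time for $D$ is $n(z,m)\eqdef m$, the number of steps before the first coordinate next moves: by the transition rules, conditionally on $D_0=(z,m)$ one has $D_m=(Z_1,n^\star(Z_1))$, where $Z_1$ has the law of one step of the $Y$-kernel started from $z$. Take the scale function $W(z,m)\eqdef z^{\alpha}$, depending on the first coordinate only, with $\alpha\in(0,\alpha_\beta)$ to be chosen; then $\PE_{(z,m)}\!\left[W(D_{n(D_0)})\right]=\PE\!\left[Z_1^{\alpha}\mid Z_0=z\right]$, so the whole question reduces to one moment computation for the $Y$-kernel.

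For that computation, observe from \eqref{eq:Y-density} that when $\beta z\ge\kappa$, i.e. for $z$ outside $[\kappa,\kappa/\beta]$, the conditional law of $Y_1$ given $Y_0=z$ is that of $\beta z\,e^{E}$ with $E$ a standard exponential; hence for any $\alpha\in(0,1)$,
\[
\PE\!\left[Z_1^{\alpha}\mid Z_0=z\right]=(\beta z)^{\alpha}\,\PE\!\left[e^{\alpha E}\right]=\frac{\beta^{\alpha}}{1-\alpha}\,z^{\alpha}\eqsp,
\]
while for $z\in[\kappa,\kappa/\beta]$ this expectation is bounded by a finite constant $b$. With $\beta'\eqdef\beta^{\alpha}/(1-\alpha)$ and $\Cset$ as in the statement (or a bounded enlargement of it; see below), we thus obtain \eqref{eq:OurDrift-bis} for $D$ with $n(z,m)=m\le n^\star(z)$, scale $W(z,m)=z^{\alpha}$, and contraction constant $\beta'$. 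Since $\alpha\mapsto\ln(1-\alpha)/\alpha$ decreases from $-1$ on $(0,1)$ and $\ln\beta<-1$, we have $\beta'<1$ exactly when $\ln\beta<\ln(1-\alpha)/\alpha$, i.e. for $\alpha\in(0,\alpha_\beta)$; fix such an $\alpha$.

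It then remains to feed this drift into Theorem~\ref{theo:subsampled2moments}, converting the hypothesis on $n^\star$ into a relation between $n$ and $W$ through $n(z,m)\le n^\star(z)$ and $\ln z=\alpha^{-1}\ln W$. In case (i), $n\le C\,z^{\gamma}=C\,W^{\gamma/\alpha}$; take $R(t)=t^{1/\eta}$, whose derivative is log-concave iff $\eta\le1$. If $\gamma<\alpha_\beta$, choose $\alpha\in(\gamma,\alpha_\beta)$ so that $\gamma/\alpha<1$: then for $\eta\in(\gamma/\alpha,1]$ one has $R^{-1}(W)-R^{-1}(\beta'W)=(1-(\beta')^{\eta})W^{\eta}\ge n$ for $W$ large, i.e. condition \eqref{case2}. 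If $\gamma\ge\alpha_\beta$, then $\gamma/\alpha>1$ for every admissible $\alpha$, and for $\eta>\gamma/\alpha$ the map $t\mapsto R(t)/t$ is nonincreasing and $R\circ n\le W$ for $z$ large, i.e. condition \eqref{case1}. Either way Theorem~\ref{theo:subsampled2moments} yields $\PE_{(z,m)}\!\left[\tau_\Cset^{1/\eta}\right]\le C\,z^{\alpha}$. In case (ii), $n\le C(\ln z)^{\gamma}=C\alpha^{-\gamma}(\ln W)^{\gamma}$; take $R(t)=\exp\!\left(\eta\alpha\,t^{1/(1+\gamma)}\right)$, which for $t$ large is convex with log-concave derivative, so (after the harmless modification on a bounded interval allowed by the remark following the theorem) it is eligible for condition \eqref{case2}. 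Expanding $R^{-1}(t)=(\eta\alpha)^{-(1+\gamma)}(\ln t)^{1+\gamma}$ gives, as $W\to\infty$,
\[
R^{-1}(W)-R^{-1}(\beta'W)\sim(\eta\alpha)^{-(1+\gamma)}\,(1+\gamma)\,(-\ln\beta')\,(\ln W)^{\gamma}\eqsp,
\]
which dominates $n$ exactly when $\eta^{1+\gamma}\alpha<(1+\gamma)(-\ln\beta')$; using $-\ln\beta'=\ln\!\left((1-\alpha)/\beta^{\alpha}\right)$ this is the stated condition $\eta<\left\{(1+\gamma)\alpha^{-1}\ln\!\left((1-\alpha)/\beta^{\alpha}\right)\right\}^{1/(1+\gamma)}$, and Theorem~\ref{theo:subsampled2moments} then gives $\PE_{(z,m)}\!\left[\exp\!\left(\eta\alpha\,\tau_\Cset^{1/(1+\gamma)}\right)\right]\le C\,z^{\alpha}$. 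In case (iii), $n\le n^\star$ is bounded; take $R(t)=\left\{(1-\alpha)\beta^{-\alpha}\right\}^{t}=(1/\beta')^{t}$, which is convex with log-affine (hence log-concave) derivative and satisfies $R^{-1}(W)-R^{-1}(\beta'W)=1$, so condition \eqref{case2} holds wherever $n(z,m)=m\le1$, giving $\PE_{(z,m)}\!\left[\left\{(1-\alpha)\beta^{-\alpha}\right\}^{\tau_\Cset}\right]\le C\,z^{\alpha}$. In each case the bound for $(z,m)\in\Cset$ follows by adjusting the constant, since $z^{\alpha}$ is bounded below on $\Xset$.

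The step I expect to be the main obstacle is the small-set bookkeeping, which is most delicate in case (iii). For the geometric rate, condition \eqref{case2} forces $m\le1$, hence $n^\star(z)=1$, wherever the drift is actually used, namely on $\Cset^{c}$; when $\{z:n^\star(z)\ge2\}$ (a bounded set, since $n^\star(z)\to1$) is not contained in $[\kappa,\kappa/\beta]$ one must first run the argument with $\Cset$ replaced by a slightly larger bounded small set $\Cset'$, and then transfer the geometric moment from $\tau_{\Cset'}$ back to $\tau_\Cset$ — which costs nothing provided $-\ln\beta'$ does not exceed the exponential-moment rate of the return time to $\Cset$ of the underlying stable $D/M/1$ workload (the requisite comparison is exactly what the restriction $\alpha<\alpha_\beta$ buys). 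In cases (i)–(ii) this problem evaporates: either Theorem~\ref{theo:subsampled2moments} already refers to $\tau_\Cset$, or, since the rate $R$ is then subgeometric, Corollary~\ref{coro:Pi-integ} lets one pass to any accessible set once $D$ has been checked to be phi-irreducible, aperiodic, with $\Cset$ small — itself a routine verification, as are the convexity and log-concavity checks on the rates $R$ needed to extract the precise exponents appearing in the statement.
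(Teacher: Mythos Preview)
Your proposal is correct and follows essentially the same route as the paper: establish the subsampled drift $P^{m}V(z,m)\le\beta' V(z,m)+b'\un_\Cset$ with $V(z,m)=z^\alpha$ and $\beta'=\beta^\alpha/(1-\alpha)$ via the explicit $Y$-kernel computation, then invoke Theorem~\ref{theo:subsampled2moments} with $R(t)=t^{1/\eta}$, $R(t)\sim\exp(\eta\alpha\,t^{1/(1+\gamma)})$ and $R(t)=(1/\beta')^t$ in cases (i)--(iii) respectively. Your discussion of the small-set bookkeeping (enlarging $\Cset$ so that the relation between $n$ and $W$ holds on $\Cset^c$, then transferring the moment back) is in fact more explicit than the paper's, which absorbs these issues into the ``$\sim$'' notation and the remark following Theorem~\ref{theo:subsampled2moments}.
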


  


\begin{proof}
  We first of all establish a drift condition of the form
  \eqref{eq:OurDrift-bis} for the chain $D$
  .  Let $V(z,m) = z^\alpha$,
  with $\alpha \in (0,\alpha_\beta)$. Then 
  $\PE[V(Z_m,M_m) \;|\; Z_0=z, M_0=m] = \PE[\,Y_1^\alpha
  \;|\; Y_0=z]$. When $z\notin [\kappa,\kappa/\beta]$,
\[
\PE[V(Z_m,M_m) \;|\; Z_0=z, M_0=m] = \int_{\beta z}^\infty y^\alpha \frac{\beta
  z}{y^2}\, dy = \frac{\beta^\alpha}{1-\alpha}z^\alpha \,.
\]
Since $\alpha < \alpha_\beta$, $\alpha \ln \beta <
  \ln(1-\alpha)$ and the chain $D$ satisfies the drift condition
\begin{equation}
  \label{eq:D-drift}
  P^m V(z,m) \leq \beta' V(z,m)\,, \quad\text{with $\beta' = \frac{\beta^\alpha}{1-\alpha}<1$,}
\end{equation}
whenever $z\notin[\kappa,\kappa/\beta]$. If $z\in[\kappa,\kappa/\beta]$ however, then
\begin{align*}
  \PE[V(Z_m,M_m) \;|\; Z_0=z, M_0=m]  & = \int_\kappa^\infty y^\alpha \frac{\beta z}{y^2}\, dy +\left( 1-\frac{\beta z}{\kappa}\right)\kappa^\alpha \\
  &\leq \beta' z^\alpha +\left(\frac{1-\beta^\alpha}{1-\alpha}\right)
  \kappa^\alpha \,.
\end{align*}
It follows that $D$ satisfies $P^m V(z,m) \leq \beta'V(z,m)+b'\un_{\Cset}$.

We may now apply
Theorem~\ref{theo:subsampled2moments} to establish moments of the return time
of $D$ to the  set $\Cset$. 
For example, suppose that $n^*(z)\sim z^\gamma$, for
some $\gamma\in [0,\alpha_\beta)$ and set $R(z) \eqdef z^{1/\eta}$ for some
$\eta \in (\gamma /\alpha,1]$. It follows that $R$ satisfies the conditions of
Theorem~\ref{theo:subsampled2moments}(\ref{case2}), with
\begin{align*}
 R^{-1}(V(z,m)) - R^{-1}(\beta'V(z,m)) - m & \geq R^{-1}(z^\alpha) - R^{-1}(\beta'z^\alpha) - n^*(z) \\
& = \left(1 - \beta'^\eta\right) z^{\alpha\eta} - z^\gamma \geq 0 \,.
\end{align*}
(Here we have used the fact that $m\leq n^*(z)$, by definition of $\Xset$.)
Thus, 
\[ \PE_{(z,m)} [\,\tau_{\Cset}^{1/\eta}\, ] \leq C\{ z^\alpha + b'\un_{\Cset}(z,m)\} \,. \]

If $n^*(z)\sim z^\gamma$ with $\gamma \geq \alpha_\beta$ then the same argument shows that the function $R(z) = z^{1/\eta}$, with
$\eta>\gamma/\alpha$, satisfies
Theorem~\ref{theo:subsampled2moments}(\ref{case1}). Parts (ii) and (iii) follow similarly by taking $R(z)\sim \exp(\eta\alpha z^{1/(1+\gamma)})$ and $R(z)\sim
 \exp(\eta\alpha z)$ (for some $\eta>0$) respectively.
\end{proof}

In Proposition~\ref{prop:tame-domination}(i), $1/\eta \geq 1$ iff $\gamma \in
[0, \alpha_\beta)$. When $1/\eta \geq 1$ and $D$ is phi-irreducible, aperiodic
and $\Cset$ is small, this shows that $D$ possesses an invariant probability
distribution and is ergodic. The convergence to $\pi$ (in total variation norm)
occurs at the polynomial rate $n^{1/\eta-1}$ (see
\cite{tuominen:tweedie:1994}). When $1/\eta <1$, we cannot deduce from the
control of this moment the existence of $\pi$.

The chain $D$ is of interest for the following reason. Suppose that
   $\Phi$ is a tame chain satisfying $P^{n(x)}W(x) \leq
      \beta W(x) +b\un_{\Cset'}$ where $n(x) = n^\star
        \circ W(x)\leq W^\delta(x)$ for some $\delta\in(0,1)$.
Connor and Kendall~\cite{Connor.Kendall-2007} show that the chains $Z$ and $W(\Phi)$ can be
coupled so that $Z$ dominates $W(\Phi)$ at the times when $Z$ jumps. Thus the chain $D$ `pseudo-dominates' $W(\Phi)$, and this coupling can be
exploited to produce a perfect simulation algorithm for $\Phi$. Proposition~\ref{prop:tame-domination} allows us to calculate ergodic properties of $D$, and hence bound the expected run-time of the algorithm: this issue is not addressed in~\cite{Connor.Kendall-2007}. 

\section{Subgeometric ergodicity  of strong Markov processes}
\label{sec:appli-markov}
In this section  we provide sufficient conditions for ergodicity of a strong Markov process.  In
\cite{Meyn:Tweedie:1993c,Fort:Roberts:2005,Douc:Fort:Guillin:2008}, the
conditions are (mainly) expressed in terms of a drift inequality on the
generator of the process. Our key assumption A\ref{ex:A2} is in terms of the
time the process rescaled in time and space enters a ball of radius $\rho$,
$\rho \in (0,1)$.  Proposition~\ref{prop:limfl} finds application in, for example, queuing theory as discussed below.


Let $\{\Phi_{t}, t \in \rset_+\}$ be a strong Markov process taking values
in $\Xset \subseteq \rset^d$.  It is assumed that $(\Omega, \mathcal{A},
\mathcal{F}_t, \Phi_t, \PP_x)$ is a Borel-right process on the space $\Xset$
endowed with its Borel $\sigma$-field $\mathcal{B}(\Xset)$.  We assume that the
sub-level sets $\{x \in \Xset,\, |x| \leq \ell \}$ are compact subsets of
$\Xset$ ($|\cdot|$ is a norm on $\rset^d$).  \debutA
\item \label{ex:A2} $\lim_{|x| \to \infty} |x|^{-(p+1)} \PE_x \left[ |\Phi_{
      \lfloor t_0 |x|^{1+\tau} \rfloor }|^{p+1} \right] = 0$ for some $t_0>0$,
  $p \geq 0$ and $0 \leq \tau \leq p$.
\item \label{ex:A4} For any $t_\star>0$, there exists $C$ such that for any $x
  \in \Xset$, $\sup_{t \leq t_\star} \int P^t(x,dy) |y|^{p+1} \leq C \,
  |x|^{p+1} $.
\item \label{ex:A1} Every compact subset of $\Xset$ is small for the process and
  the skeleton $P$ is phi-irreducible.
\item \label{ex:A5} There exist $q \geq 0$ and $C$ such that for any $x$,
  $\PE_x \left[ \sum_{k=0}^{ \lfloor t_0 |x|^{1+\tau} \rfloor -1} |\Phi_{k
    }|^q\right] \leq C |x|^{p+1}$.  \finA Recall that a set $\Cset$ is said to
  be small (for the process) if there exist $t>0$ and a measure $\nu$ on
  $\mathcal{B}(\Xset)$ such that $P^t(x, \cdot) \geq \un_\Cset(x) \nu(\cdot)$.
  
  A\ref{ex:A2} is a condition on the process $\{\Phi_t, t \geq 0\}$ rescaled in
  time and space. Such a transformation is largely used in the queueing
  literature for the study of the stability of networks. This approach is
  refered to as the {\em fluid model} (see e.g.  \cite{robert:2003,meyn:2008} for
  a rigorous definition; see also \cite{dai:meyn:1994,dai:meyn:1995,meyn:2008}
  and references therein for applications to queueing). In these applications,
  A\ref{ex:A2} is proved by showing that the fluid model is stable (see e.g.
  \cite[Proposition 5.1]{dai:meyn:1995}). Condition A\ref{ex:A4} is a control
  of the $L^p$-moment of the system. A\ref{ex:A1} is related to the
  phi-irreducibility of the Markov process, a property which is necessary when
  ergodicity holds.  A\ref{ex:A5} is required to prove the existence of a
  steady-state value for the moments $\PE_{x}[|\Phi_t|^s]$ for $s >0$, when $t
  \to \infty$. Examples of Markov processes satisfying  A\ref{ex:A2}-\ref{ex:A5} are given in \cite{dai:meyn:1994,dai:meyn:1995}.

\begin{prop}
\label{prop:limfl}
Assume that A\ref{ex:A2}-\ref{ex:A1} hold. Then the Markov process possesses a
unique invariant probability $\pi$ and for any $x \in \Xset$, $\lim_{t \to
  \infty} (t+1)^{(p-\tau)/(1+\tau) } \ \|P^t(x,\cdot) - \pi(\cdot) \|_{\tv}
=0$.  If in addition A\ref{ex:A5} holds, then $ \int |y|^q \pi(dy)< \infty$ and
for any $x \in \Xset$ and any $0 \leq \kappa \leq 1$,
\[
\lim_{t \to + \infty} (t+1)^{\kappa (p-\tau)/(1+\tau) } \ \sup_{\{g: |g(x)|
  \leq 1 + |x|^{(1-\kappa)q} \}} \left| \PE_x \left[g(X_t) \right] - \pi(g)
\right| =0 \eqsp.
\]
\end{prop}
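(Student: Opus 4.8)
The plan is to reduce Assumptions A\ref{ex:A2}--A\ref{ex:A5} to a single subsampled geometric drift of the form \eqref{eq:OurDrift-bis} for the $1$-skeleton $P\eqdef P^1$, and then to run it through Theorem~\ref{theo:subsampled2moments} and Proposition~\ref{Drift:sequence}. First I would set $W(x)\eqdef 1+|x|^{p+1}$, $n(x)\eqdef \lfloor t_0|x|^{1+\tau}\rfloor\vee 1$, and let $\Cset$ be a closed ball $\{|x|\le \ell\}$ with $\ell$ large enough that $\Cset$ is small (by A\ref{ex:A1}) and contains $\{x:\lfloor t_0|x|^{1+\tau}\rfloor=0\}$. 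For $|x|$ large, A\ref{ex:A2} gives $\PE_x\left[|\Phi_{n(x)}|^{p+1}\right]=o(|x|^{p+1})$, so $\PE_x[W(\Phi_{n(x)})]\le \beta W(x)$ off $\Cset$ for any prescribed $\beta\in(0,1)$ after enlarging $\Cset$; on $\Cset$, A\ref{ex:A4} bounds $\PE_x[W(\Phi_{n(x)})]$ by a constant $b$. Thus \eqref{eq:OurDrift-bis} holds, with moreover $n(x)\le t_0|x|^{1+\tau}\le t_0 W(x)$ off $\Cset$; writing $n=\xi\circ W$ with $\xi(t)\sim t^{(1+\tau)/(p+1)}$, the exponent is $\le 1$ since $\tau\le p$, so $t\mapsto\xi(t)/t$ is non-increasing.

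Next I would apply Theorem~\ref{theo:subsampled2moments}(\ref{case2}) with $R(t)\eqdef(t/c_0)^{(p+1)/(1+\tau)}$. Because $(p+1)/(1+\tau)\ge 1$, $R$ is convex and $C^1$ on $(0,\infty)$ and $\ln R'$ is affine in $\ln t$ with non-negative slope, hence concave; by the remark following the theorem the remaining condition need only hold for $W(x)$ large, where concavity of $R^{-1}(t)=c_0 t^{(1+\tau)/(p+1)}$ gives $R^{-1}(W)-R^{-1}(\beta W)\ge(1-\beta)W\,[R^{-1}]'(W)=(1-\beta)c_0\frac{1+\tau}{p+1}W^{(1+\tau)/(p+1)}\ge(1-\beta)c_0\frac{1+\tau}{p+1}|x|^{1+\tau}$, which exceeds $t_0|x|^{1+\tau}\ge n(x)$ once $c_0$ is large (here one uses the freedom to pick $\beta$ and to rescale $R$). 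The theorem then yields $\PE_x[\tau_\Cset^{(p+1)/(1+\tau)}]\le C\{W(x)+b\un_\Cset(x)\}$. As this exponent is $\ge 1$, $\sup_\Cset\PE_x[\tau_\Cset]<\infty$; with $\Cset$ small and $P$ phi-irreducible and aperiodic (A\ref{ex:A1}), $P$ is positive Harris recurrent with a unique invariant probability $\pi$ (\cite{meyn:tweedie:1993}), which is then also the unique invariant probability of the process. For the rate, the bound on $\PE_x[\tau_\Cset^{(p+1)/(1+\tau)}]$ gives $\PE_x\left[\sum_{k=0}^{\tau_\Cset-1}(k+1)^{\beta_0}\right]\le C(W(x)+1)$ with $\beta_0\eqdef(p-\tau)/(1+\tau)$, i.e. $\Cset$ is regular for the rate $r_0(n)=(n+1)^{\beta_0}$; the Tuominen--Tweedie theory \cite{tuominen:tweedie:1994} then gives $r_0(n)\|P^n(x,\cdot)-\pi\|_{\tv}\to 0$ for all $x$, which upgrades to $(t+1)^{(p-\tau)/(1+\tau)}\|P^t(x,\cdot)-\pi\|_{\tv}\to 0$ since $\|\mu P^s\|_{\tv}\le\|\mu\|_{\tv}$ for $s\in[0,1)$ and $r_0(t)/r_0(\lfloor t\rfloor)\to 1$. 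This proves the first assertion.

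Under A\ref{ex:A5} as well, I would combine \eqref{eq:OurDrift-bis} (written $\PE_x[W(\Phi_{n(x)})]-W(x)\le -(1-\beta)W(x)+b\un_\Cset$) with A\ref{ex:A5} and $n(x)\le t_0 W(x)$ to get $\PE_x\left[\sum_{j=0}^{n(x)-1}(1+|\Phi_j|^q)\right]\le C(W(x)+1)$, so that Proposition~\ref{Drift:sequence} applies with $r\equiv 1$, $f(x)=1+|x|^q$, $V_k\equiv\lambda W$ ($\lambda$ large) and $S_k$ a large constant (finite on $\Cset$ by A\ref{ex:A4}); its conclusion is $\PE_x\left[\sum_{k=0}^{\tau_\Cset-1}(1+|\Phi_k|^q)\right]\le C(W(x)+1)$, whence $\sup_\Cset\PE_x\left[\sum_{k<\tau_\Cset}(1+|\Phi_k|^q)\right]<\infty$ and $\pi(|y|^q)<\infty$. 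Interpolating this bound with the one of the second paragraph through Young's inequality applied term by term, $(k+1)^{\kappa\beta_0}(1+|\Phi_k|^q)^{1-\kappa}\le\kappa(k+1)^{\beta_0}+(1-\kappa)(1+|\Phi_k|^q)$, and using $1+|x|^{(1-\kappa)q}\le 2(1+|x|^q)^{1-\kappa}$, shows $\Cset$ is regular for the pair $f_\kappa(x)=1+|x|^{(1-\kappa)q}$, $r_\kappa(n)=(n+1)^{\kappa\beta_0}$, while $\pi(f_\kappa)\le 1+\pi(|y|^q)^{1-\kappa}<\infty$ by Jensen. Tuominen--Tweedie then gives $r_\kappa(n)\|P^n(x,\cdot)-\pi\|_{f_\kappa}\to 0$ for all $x$, and the passage to continuous $t$ goes as before, now using $P^s f_\kappa\le C f_\kappa$ for $s\in[0,1)$ (Jensen together with A\ref{ex:A4}); this gives the second assertion.

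The step I expect to demand the most care is verifying that $R(t)\propto t^{(p+1)/(1+\tau)}$ genuinely satisfies condition (\ref{case2}) of Theorem~\ref{theo:subsampled2moments} --- the analytic checks on $R$ and $R^{-1}$, and in particular exploiting that $\beta$ is at our disposal and that $R$ may be rescaled to absorb $t_0$ --- together with the bookkeeping that converts the moment bounds for $\tau_\Cset$ and for $\sum_{k<\tau_\Cset}(1+|\Phi_k|^q)$ into modulated-moment regularity statements in exactly the form needed to invoke the Tuominen--Tweedie $(f,r)$-ergodicity results, and that transfers everything from the $1$-skeleton to the continuous-time process. None of this is deep, but each point is somewhat delicate.
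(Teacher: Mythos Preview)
Your approach is essentially the paper's: build the subsampled drift \eqref{eq:OurDrift-bis} on the $1$-skeleton with $W=1+|x|^{p+1}$ and $n(x)\asymp t_0|x|^{1+\tau}$, feed it to Theorem~\ref{theo:subsampled2moments} with $R(t)\propto t^{(p+1)/(1+\tau)}$, invoke \cite{tuominen:tweedie:1994} for the polynomial rate, and under A\ref{ex:A5} interpolate via Young's inequality for the $f_\kappa$-norm statement. For the $f$-moment of $\tau_\Cset$ under A\ref{ex:A5} the paper computes directly with the sampled chain $\bar\Phi_k=\Phi_{\tau^k}$ and the Comparison Theorem, which is exactly Proposition~\ref{Drift:sequence} unwound, so your route through that proposition is equivalent.

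There is one small but genuine gap. You write ``with $\Cset$ small and $P$ phi-irreducible and aperiodic (A\ref{ex:A1})'', but A\ref{ex:A1} only asserts that compact sets are small for the \emph{process} (i.e.\ for some $P^t$ with $t>0$ not necessarily an integer) and that the skeleton $P$ is phi-irreducible; it gives neither aperiodicity of the skeleton nor smallness of $\Cset$ for $P$. The paper closes this by first using the moment bound on the delayed hitting time $\tau_\Cset(\delta)$, together with smallness of $\Cset$ for the process, to obtain positive Harris recurrence of the \emph{continuous-time} process; only then does it deduce aperiodicity of the skeleton and smallness of compacta for the skeleton via \cite[Proposition 6.1]{meyn:tweedie:1993b} and \cite[Section 5.4.3]{meyn:tweedie:1993}. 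You should insert that step before your appeal to \cite{tuominen:tweedie:1994}; otherwise the order you use (positive recurrence of the skeleton $\Rightarrow$ invariant probability $\Rightarrow$ Tuominen--Tweedie) rests on hypotheses on $P$ that A\ref{ex:A1} does not supply directly.
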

Proposition~\ref{prop:limfl} provides a polynomial rate of convergence and
convergence of power moments. More general rates of convergence can be obtained
by replacing in A\ref{ex:A2}-\ref{ex:A4} the power functions $|x|^{1+\tau}$,
$|x|^{p+1}$ by more general functions $W$; more general moments can be obtained
by replacing the power function $|x|^{q}$ in A\ref{ex:A5} by a general function
$f$. These extensions are easily obtained from the proof of
Proposition~\ref{prop:limfl}; details are omitted.

Proposition~\ref{prop:limfl} extends \cite[Theorem 3.1]{dai:1995} which
addresses the positive Harris-recurrence of the process. It also extends
\cite[Theorem 6.3]{dai:meyn:1995} by providing \textit{(i)} a continuum range
of rates of convergence (and thus a continuum range of rate functions) and
\textit{(ii)} an explicit norm of convergence.

\begin{proof}{\em of Proposition~\ref{prop:limfl}}.
  The reader unfamiliar with basic results on Markov processes may refer to
  \cite{meyn:tweedie:1993:art}.  For a measurable set $\Cset$ and a delay
  $\delta >0$, let $\tau_\Cset(\delta) \eqdef \inf \{t \geq \delta, \Phi_t \in
  \Cset \}$ denote the $\delta$-delayed hitting-time on $\Cset$; by convention,
  we write $\tau_\Cset$ for $\tau_\Cset(0)$.  Let $\beta \in (0,1)$ and set
  $W(x) \eqdef 1 + |x|^{p+1}$.  By A\ref{ex:A2}, there exists $\ell>0$ such
  that $t_0 \ell \in \nset$ and for any $x \notin \mathcal{C} \eqdef \{x,
  |x|^{1+\tau} \leq \ell \}$, $\PE_x \left[ \left| \Phi_{ \lfloor t_0
        |x|^{1+\tau} \rfloor}\right|^{p+1} \right] \leq 0.5 \, \beta \,
  |x|^{p+1}$.  We can assume without loss of generality that $\ell$ is large
  enough so that $\PE_x \left[ W(\Phi_{ \lfloor t_0 |x|^{1+\tau} \rfloor})
  \right] \leq \beta \, W(x)$ for $x \notin \Cset$.  Set $n(x) \eqdef \max(\ell
  t_0, \lfloor t_0 |x|^{1+\tau} \rfloor)$. By A\ref{ex:A4}, there exists
  $b<\infty$ s.t.
  \begin{equation}
    \label{eq:Drift:Queueing}
    \PE_x \left[ W(\Phi_{n(x)}) \right]  \leq \beta \, W(x) + b \un
  _\Cset(x) \eqsp.
  \end{equation}
  By Theorem~\ref{theo:subsampled2moments}, there exists $C<\infty$ such that
  $\PE_x \left[ \{\tau_{\star,\Cset}\}^{(p+1)/(\tau+1)} \right] \leq C W(x)$
  where $\tau_{\star,\Cset} \eqdef \inf \{n \geq 1, \Phi_{n} \in \Cset \}$ is
  the return time to $\Cset$ of the skeleton $P$.  Hence, there exists a delay
  $0<\delta \leq 1$ such that $\PE_x[\tau_\Cset(\delta) ] \leq C \, W(x)$;
  $\sup_\Cset W < \infty$ and $\Cset$ is small for the process (by
  A\ref{ex:A1}), so $\{\Phi_t, t\geq 0\}$ is positive Harris-recurrent and
  possesses a unique invariant probability measure $\pi$.  By~\cite[Proposition
  6.1]{meyn:tweedie:1993b} and \cite[Section 5.4.3]{meyn:tweedie:1993}, the
  skeleton $P$ is aperiodic and any compact set is small for the skeleton $P$.
  A\ref{ex:A1} and the above properties on the skeleton $P$ imply $\lim_{n \in
    \nset} (n+1)^{(p-\tau)/(1+\tau)} \ \| P^{n}(x,\cdot) - \pi(\cdot) \|_{\tv}
  =0 $ for any $x \in \Xset$ \cite[Theorem 2.1]{tuominen:tweedie:1994}, which
  in turn implies that $\lim_{t \in \rset^+} (t+1)^{(p-\tau)/(1+\tau)} \ \|
  P^{t }(x,\cdot) - \pi(\cdot) \|_{\tv} =0$.  Set $f(x) \eqdef 1 + |x|^q$.
  A\ref{ex:A5} and (\ref{eq:Drift:Queueing}) imply that
  \begin{multline*}
    \PE_x \left[ \sum_{k=0}^{\tau_{\star,\Cset}-1} f(\Phi_{k}) \right] \leq
    \PE_x \left[ \sum_{k=0}^{\bar{\tau}_{\Cset}-1} \PE_{ \bar \Phi_{k}} \left[
        \sum_{k=0}^{n(\bar \Phi_{k})-1} f(\Phi_k) \right] \right] \leq C\,
    \PE_x \left[ \sum_{k=0}^{\bar{\tau}_{\Cset}-1} W\left( \bar \Phi_{k}
      \right) \right] \leq C' \ W(x)
  \end{multline*}
  where $\bar{\tau}_{\Cset}$ is the return time to $\Cset$ of the discrete-time
  chain $\{\bar \Phi_n, n \geq 0\}$ with transition kernel $P^{n(x)}(x,\cdot)$.
  Hence $\pi(f)<\infty$ by \cite[Theorem 14.3.7]{meyn:tweedie:1993}.  As in
  \cite{douc:fort:moulines:soulier:2004}, we obtain $(f,r)$-modulated moments
  of $\tau_{\star,\Cset}$ by using Young's inequality (see e.g.
  \cite{krasnoselskii:rutickii:1961}). This yields ergodic properties for the
  skeleton $P$ and the desired limits for $\{P^t, t \geq 0\}$.
\end{proof}

\section{Appendix: Proofs}
\label{sec:Proof}

\subsection{Proof of Proposition~\ref{prop:TwoMoments}}
In the proof, $C$ is constant and its value may change upon each appearance. We
use the following properties \debutR
\item \label{R1} If $\Dset$ is petite for a phi-irreducible and aperiodic transition
  kernel, then it is also small (\cite[Theorem 5.5.7]{meyn:tweedie:1993}).
\item \label{R1bis} If a transition kernel is phi-irreducible and aperiodic,
  then any skeleton is phi-irreducible and aperiodic (\cite[Proposition
  5.4.5]{meyn:tweedie:1993}).
\item \label{R2} If $\Dset$ is $\nu$-small for a phi-irreducible and aperiodic transition
  kernel, then we can assume without loss of generality that $\nu$ is a maximal
  irreducibility measure (\cite[Proposition 5.5.5]{meyn:tweedie:1993}).
\item \label{R3} If there exist measurable functions $f,V : \Xset \to
  [1,\infty)$, a measurable set $\Cset$ and a constant $b$ such that $PV \leq
  V - f + b\un_\Cset$, then for any stopping-time $\tau$ (\cite[Proposition
  11.3.2]{meyn:tweedie:1993})
\[
\PE_x \left[ \tau \right] \leq \PE_x \left[ \sum_{k=0}^{\tau-1} f(\Phi_k)
\right] \leq V(x) + b \PE_x \left[ \sum_{k=0}^{\tau-1} \un_\Cset(\Phi_k)
\right] \eqsp.
\]
If in addition, there exist $m \geq 1$ and $c>0$ such that $c \un_\Cset(x) \leq
P^m(x,\Dset)$, then
\[
\PE_x \left[ \tau_\Dset \right] \leq \PE_x \left[ \sum_{k=0}^{\tau_\Dset-1}
  f(\Phi_k) \right] \leq V(x) + \frac{b}{c} \ \PE_x \left[
  \sum_{k=0}^{\tau_\Dset-1} \un_\Dset(\Phi_{k+m}) \right] \leq V(x) + \frac{b
  (m+1)}{c} \eqsp.
\]
 \finR

 The conditions (\ref{eq:DoubleControl}), $\sup_\Cset V< \infty$ and R\ref{R3}
 imply that $\sup_{x \in \Cset} \PE_x[\tau_\Cset] < \infty$.  Hence, $\Cset$ is
 an accessible set. By R\ref{R1} and R\ref{R2}, there exist $m \geq 1$ and a
 maximal irreducibility measure $\nu_m$ such that $P^m(x,\cdot) \geq
 \un_\Cset(x) \nu_m(\cdot)$.

 {\bf Step 1: From $P$ to the strongly aperiodic transition kernel $P^m$.}  By
 (\ref{eq:DoubleControl}), there exists a constant $C$ such that $W^{(m)}
 \eqdef \sum_{k=0}^{m-1} P^k W \leq C \ W$.  Write $n = k m +l $ for $ k \in
 \nset$ and $l \in \{0, \cdots, m-1\}$ so that
\begin{multline*}
  n \ \|P^n(x, \cdot) - P^n(x', \cdot) \|_W \leq n \ \|P^{km }(x, \cdot) -
  P^{km}(x', \cdot) \|_{W^{(m)}} \\
  \leq C \ (k+1) \ \|P^{km }(x, \cdot) - P^{km}(x', \cdot) \|_{W} \eqsp.
\end{multline*}
By R\ref{R1bis}, $P^m$ is phi-irreducible and strongly aperiodic, and satisfies
drift inequalities of the form (\ref{eq:DoubleControl}). Indeed,
\begin{multline*}
  P^m V(x) \leq V(x) - \sum_{k=0}^{m-1} P^k W(x) + b \sum_{k=0}^{m-1}
  P^k(x,\Cset) \leq V(x) - W(x) + b \sum_{k=0}^{m-1} P^k(x,\Cset) \eqsp;
\end{multline*}
this implies that there exist a small set $\Dset$ (for $P^m$) and a constant
$\bar b < \infty$ such that $P^m V(x) \leq V(x) - 0.5 \ W(x) + \bar b
\un_\Dset(x)$ (see e.g.\cite[proof of Lemma 14.2.8]{meyn:tweedie:1993}).  By
R\ref{R3}, for any accessible set $\mathcal{A}$ (for $P^m$), there exists $C$
such that
\begin{equation}
  \label{eq:MomentforPm}
  \PE_x[\tau_{\mathcal{A}}^{(m)}] \leq C V(x) \eqsp,
\end{equation}
where $\tau_{\mathcal{A}}^{(m)} \eqdef \inf \{n \geq 1, \Phi_{nm} \in
{\mathcal{A}} \}$.  Furthermore, we also have $P^m W(x) \leq W(x) + b
\sum_{k=0}^{m-1} P^k(x,\Cset)$.  This implies for any accessible (for $P^m$)
measurable set $\mathcal{A}$
\begin{equation}
  \label{eq:ControlforPm}
  (k+1) \ \PE_{x} \left[ W(\Phi_{km}) \un_{k \leq
    \tau_{\mathcal{A}}^{(m)} }\right] \leq C V(x) \eqsp;
\end{equation}
(the proof is postponed below).
Following the same approach as in \cite{tuominen:tweedie:1994} or \cite[Chapter
14]{meyn:tweedie:1993}, we use the {\em splitting technique} and associate to
the chain with transition kernel $P^m$ a split chain that possesses an atom.

{\bf Step 2: From $P^m$ to an atomic transition kernel $\cP$.}  A detailed
construction of the split chain and the connection between the original chain
and the split chain can be found in \cite[Chapter 5]{meyn:tweedie:1993}. We use
the same notation as in \cite{meyn:tweedie:1993}. The split chain $\{(\Phi_n,
d_n), n \geq 0\}$ is a chain taking values in $\Xset \times \{0,1\}$: its
transition kernel is denoted by $\cP$. 
Based on the connection between $P^m$ and $\cP$,
Proposition~\ref{prop:TwoMoments} holds provided there exists $C
< \infty$ with
\begin{equation}
  \label{eq:GoalAtomic}
  m \ (n+1) \ \sup_{\{f, \sup_\Xset |f| [ W]^{-1} \leq 1 \}} \ | \cP^n f(x) -
\cP^n f(y)| \leq C \ \{V(x) +V(y)\} \eqsp.
\end{equation}
We prove~(\ref{eq:GoalAtomic}). $\cP$ is phi-irreducible and aperiodic and
possesses an accessible atom $\alpha \eqdef \Cset \times \{1\}$.  Let
$\tau_\alpha$ be the return time to the atom $\alpha$.  From
(\ref{eq:MomentforPm}), we have $\cPE_{x^\star}\left[ \tau_\alpha\right] \leq C
V(x)$ (see e.g. \cite[Proposition 3.7]{tuominen:tweedie:1994} or \cite[Lemma
2.9]{nummelin:tuominen:1983}); and by (\ref{eq:ControlforPm})
\begin{equation}
  \label{eq:ControlforAtom}
  (k+1) \ 
\cPE_{x^\star} \left[ W(\Phi_{k}) \un_{k \leq \tau_\alpha }\right] \leq C V(x)
\eqsp.
\end{equation}
(The proof of (\ref{eq:ControlforAtom}) is postponed below.)\\
Set $a_x(n) \eqdef \cPP_{x^\star}( \tau_\alpha = n)$, $u(n) \eqdef \cPP_\alpha(
(\Phi_n, d_n) \in \alpha)$ and $t_f(n) \eqdef \cPE_{\alpha}\left[ f(\Phi_n)
  \un_{n \leq \tau_\alpha} \right]$, where $a \ast b(n) \eqdef \sum_{k=0}^{n}
a(k) b(n-k)$.  Then, by the first-entrance last-exit decomposition
\cite[Chapter 14]{meyn:tweedie:1993}, for any function $f$ such that $|f| \leq
W$,
\begin{multline*}
  (n+1) \ | \cP^n f(x) - \cP^n f(y)| \leq (n+1) \ \cPE_{x^\star} \left[
    |f|(\Phi_n) \un_{n \leq \tau_\alpha} \right] + (n+1) \ \cPE_{y^\star}\left[
    |f|(\Phi_n) \un_{n \leq
      \tau_\alpha} \right] \\
  + (n+1) \ \left| a_x \ast u - a_y \ast u \right| \ast t_{|f|}(n)\eqsp.
\end{multline*}
By (\ref{eq:ControlforAtom}), the first two terms in the right-hand side are
upper-bounded by $C \{V(x) + V(y) \}$.  Applying again
(\ref{eq:ControlforAtom}), $\sup_{k \geq 1} k \ t_{|f|}(k) \leq \sup_{k \geq 1}
k \ \cPE_{\alpha}\left[ W(\Phi_k) \un_{k \leq \tau_\alpha} \right] \leq C
\sup_\Cset V < \infty$, so
\begin{align*}
  (n+1) \ \left| a_x \ast u - a_y \ast u \right| \ast t_{f}(n) &\leq \left(
    \sup_{k \geq 1} k \ t_{W}(k) \right) \quad \sup_{n\geq 1} (n+1) \ 
  \left| a_x \ast u - a_y \ast u \right|(n) \\
  &\leq C \ \sup_{n\geq 1} (n+1) \ \left| a_x \ast u - a_y \ast u \right|(n)
  \eqsp.
\end{align*}
Since $\sup_\Cset V< \infty$, $\cPE_\alpha \left[ \tau_\alpha\right] < \infty$;
standard results from renewal theory imply (see e.g.  \cite{lindvall:1979})
$\sup_{n\geq 1} (n+1) \ \left| a_x \ast u - a_y \ast u \right|(n) \leq C
\{\cPE_{x^\star}\left[ \tau_\alpha \right] + \cPE_{y^\star}\left[ \tau_\alpha
\right] \}$.  The right-hand side is upper bounded by $C \{V(x) + V(y) \}$.
This concludes the proof of (\ref{eq:GoalAtomic}) and thus the overall proof.

{\bf Proof of inequality \eqref{eq:ControlforPm}.} Using $P^m W(x) - W(x) \leq b
\sum_{j=0}^{m-1} P^j(x,\Cset)$,
\begin{align}
\label{eq:RefPourLa}
  (n+1) \ \PE_{x} & \left[ W(\Phi_{n m}) \un_{n \leq \tau_{\mathcal{A}}^{(m)}
    }\right] - W(x) \nonumber \\
 & = \sum_{k=1}^{n} \PE_{x} \left[ (k+1) W(\Phi_{km}) \un_{k \leq
      \tau_{\mathcal{A}}^{(m)}} - k W(\Phi_{(k-1)m}) \un_{k-1 \leq
      \tau_{\mathcal{A}}^{(m)}}
  \right]\nonumber  \\
& \leq \sum_{k=1}^{n} \PE_{x} \left[ W(\Phi_{km}) \un_{k \leq \tau_{\mathcal{A}}^{(m)}}
  \right] + \sum_{k=1}^{n} \PE_{x} \left[ k \{ W(\Phi_{km}) - W(\Phi_{(k-1)m})
    \}
    \un_{k-1 \leq \tau_{\mathcal{A}}^{(m)}} \right] \nonumber  \\
 & \leq \PE_{x} \left[ \sum_{k=1}^{ \tau_{\mathcal{A}}^{(m)}} W(\Phi_{km}) \right] + b \ 
  \PE_{x} \left[ \sum_{k=0}^{\tau_{\mathcal{A}}^{(m)}}(k+1) \ \sum_{j=0}^{m-1}
    P^j(\Phi_{km}, \Cset) \right] \eqsp.
\end{align}
Since ${\mathcal{A}}$ is an accessible set for $P^m$ and $\Dset$ is small for
$P^m$, there exist $c,r>0$ such that $C \un_\Dset(x) \leq
P^{mr}(x,\mathcal{A})$.  Hence, by R\ref{R3}, the first term in
(\ref{eq:RefPourLa}) is upper bounded by $C V(x)$.  $P$ is aperiodic and
$\Cset$ is small: there exist $l \geq 1$ and a non trivial measure $\nu$ such
that $\nu({\mathcal{A}})>0$ and $P^{lm - j}(x,{\mathcal{A}}) \geq \un_\Cset(x)
\nu({\mathcal{A}})$ for any $ j \in \{0, \cdots, m-1\}$ (see e.g.  \cite[proof
of Lemma 14.2.8]{meyn:tweedie:1993}).  Hence, $P^{lm }(x,{\mathcal{A}}) \geq
P^j(x,\Cset) \nu({\mathcal{A}})$ for any $ j \in \{0, \cdots, m-1\}$, and this
yields $ m P^{lm }(x,{\mathcal{A}}) \geq \sum_{j=0}^{m-1} P^j(x,\Cset)
\nu({\mathcal{A}})$.  Therefore, there exists $C < \infty$ such that
\[
\sum_{j=0}^{m-1} P^j(x, \Cset) \leq C \ P^{lm}(x,{\mathcal{A}}) = C \ \PE_x \left[
  \un_{\mathcal{A}}(\Phi_{lm})\right] \eqsp.
\]
This yields
\begin{align*}
  b \ \PE_{x} \left[ \sum_{k=0}^{\tau_{\mathcal{A}}^{(m)}}(k+1) \ 
    \sum_{j=0}^{m-1} P^j(\Phi_{km}, \Cset) \right] &\leq C \ \PE_{x} \left[
    \sum_{k=0}^{\tau_{\mathcal{A}}^{(m)}}(k+1) \ \PE_{\Phi_{km}} \left[
      \un_{\mathcal{A}}(\Phi_{lm})\right] \right]  \\
& \leq C \ \PE_{x} \left[ \sum_{k=l}^{\tau_{\mathcal{A}}^{(m)}+l}(k-l+1) \ \ 
    \un_{\mathcal{A}}(\Phi_{km})\right] \\
 \leq  C \ \PE_{x} \Biggl[ \un_{l \leq \tau_{\mathcal{A}}^{(m)}} \ 
    \sum_{k=\tau_{\mathcal{A}}^{(m)}}^{\tau_{\mathcal{A}}^{(m)}+l} & (k-l+1) \ \ 
    \un_{\mathcal{A}}(\Phi_{km})\Biggr] + C \ \PE_{x} \left[ \un_{l >
      \tau_{\mathcal{A}}^{(m)}} \ \sum_{k=l}^{2l}(k-l+1) \ \ 
    \un_{\mathcal{A}}(\Phi_{km})\right] \\
& \leq C \ \{ \PE_{x} \left[
    \tau_{\mathcal{A}}^{(m)} \right] +1 \} \leq C V(x)\eqsp.
\end{align*}

{\bf Proof of inequality \eqref{eq:ControlforAtom}.} In the sequel, we write
$\tau_\Cset$ as a shorthand notation for $\tau_{\Cset \times \{0,1\}}$, and
$\Phi_{l:n} \notin \Cset$ for $\{\Phi_l \notin \Cset, \cdots, \Phi_n \notin
\Cset \}$. Let $\{\tau^{q}, q \geq 1 \}$ be the successive return times to
$\Cset \times \{0,1\}$.  We write
\begin{multline}
\label{eq:derdesder}
  (k+1) \ \cPE_{x^\star} \left[ W(\Phi_{k}) \un_{k \leq \tau_\alpha }\right] =
  (k+1) \ \cPE_{x^\star} \left[ W(\Phi_{k}) \un_{k \leq \tau_\alpha,
      \tau_\alpha = \tau_\Cset }\right] \\
  + \sum_{q \geq 1} (k+1) \ \cPE_{x^\star} \left[ W(\Phi_{k}) \un_{
      \tau_\Cset^q < k \leq \tau_\alpha} \un_{\Phi_{\tau_\Cset^q+1:k-1} \notin
      \Cset}\right] \eqsp.
\end{multline}
The connection between $P^m$ and $\cP$ yields:
\[
(k+1) \ \cPE_{x^\star} \left[ W(\Phi_{k}) \un_{k \leq \tau_\alpha, \tau_\alpha
    = \tau_\Cset }\right] \leq (k+1) \ \PE_{x} \left[ W(\Phi_{km}) \un_{k \leq
    \tau_\Cset^{(m)} }\right] \leq C V(x) \eqsp.
\]
For the second term, let $q \geq 1$ and consider a general term of the series:
\begin{align}
  (k+1) \ & \cPE_{x^\star} \left[ W(\Phi_{k}) \un_{ \tau_\Cset^q < k \leq
      \tau_\alpha} \un_{\Phi_{\tau_\Cset^q+1:k-1} \notin \Cset}\right]
  \nonumber  \\
  & \leq \sum_{l=0}^{k-1} (k+1-l ) \ \cPE_{x^\star} \left[ \un_{\tau_\Cset^q =
      l} \un_{ l < \tau_\alpha} \cPE_{\Phi_l,
      d_l}\left[W(\Phi_{k-l})\un_{\Phi_{1:k-l-1} \notin \Cset} \right] \right] \label{eq:ControlForAtom-1}\\
  &\qquad\qquad + \sum_{l=0}^{k-1} l \ \cPE_{x^\star} \left[ \un_{\tau_\Cset^q
      = l} \un_{ l < \tau_\alpha} \cPE_{\Phi_l,
      d_l}\left[W(\Phi_{k-l})\un_{\Phi_{1:k-l-1} \notin \Cset} \right] \right]
  \,. \label{eq:ControlForAtom-2}
\end{align}
By definition of the split chain,
\begin{align*}
  (k+1-l )& \cPE_{x^\star} \left[ \un_{\tau_\Cset^q = l} \un_{ l < \tau_\alpha}
    \ \cPE_{\Phi_l, d_l}\left[W(\Phi_{k-l})\un_{\Phi_{1:k-l-1} \notin \Cset}
    \right] \right] \\
&  = \cPE_{x^\star} \left[ \un_{\tau_\Cset^q = l} \un_{ l < \tau_\alpha}
    \cPE_{\Phi_l, d_l}\left[ \cPE_{\Phi_1, d_1}\left[ (k+1-l ) \ 
        W(\Phi_{k-l-1})\un_{\Phi_{0:k-l-2} \notin \Cset} \right] \right]\right] \\
 & \leq \cPE_{x^\star} \left[ \un_{\tau_\Cset^q = l} \un_{ l < \tau_\alpha}
    \cPE_{\Phi_{l}, d_{l}}\left[ \PE_{\Phi_1}\left[ (k+1-l ) \ 
        W(\Phi_{m(k-l-1)})\un_{ \tau_\Cset^{(m)} \geq k-l-1 } \right] \right]\right] \\
 & \leq C \ \cPE_{x^\star} \left[ \un_{\tau_\Cset^q = l} \un_{ l < \tau_\alpha} \cPE_{\Phi_{l}, d_{l}}\left[ V(\Phi_1)\right] \right] \leq \sup_\Cset RV \  \cPP_{x^\star} \left( \un_{\tau_\Cset^q = l} \un_{ l < \tau_\alpha} \right)\eqsp.
\end{align*}
Hence, (\ref{eq:ControlForAtom-1}) is upper bounded by $C \ \cPE_{x^\star}
\left( \tau_\Cset^q < \tau_\alpha \right)$ and thus by $C \ 
(1-\epsilon)^{q-1}$.  Furthermore, (\ref{eq:ControlForAtom-2}) is upper bounded
by
\[
C \ \sup_\Cset R W \ \sum_{l=0}^{k-1} l \ \cPP_{x^\star} \left( \tau_\Cset^q =
  l, l < \tau_\alpha\right) \leq C \cPE_{x^\star}\left[ \tau_\Cset^q \ 
  \un_{\tau_\Cset^q < \tau_\alpha}\right] \eqsp.
\]
The decomposition $\tau_\Cset^q = \sum_{r=1}^{q-1} \{ \tau_\Cset^{r+1} -
\tau_\Cset^{r} \} + \tau_\Cset$, and the inequalities $\cPP_{x^\star}(
\tau_\Cset^r < \tau_\alpha) \leq (1-\epsilon)^r$ and $\sup_{\Cset \times
  \{0,1\}} \cPE_{x,d}\left[ \tau_\Cset \right] < \infty$, imply that $
\cPE_{x^\star}\left[ \tau_\Cset^q \ \un_{\tau_\Cset^q < \tau_\alpha}\right]
\leq C (1-\epsilon)^q \ \cPE_{x^\star}[\tau_\Cset]$.

The second term in the rhs of \eqref{eq:derdesder} is a convergent series. This
concludes the proof.


\subsection{Proof of Proposition~\ref{lemma:Moment2Ergod}}
Under the stated assumptions on $r$, $\inf_\Xset V >0$, and $\inf_\Xset W >0$.
For any measurable set $\Cset$ and any function $f : \Xset \to [1,\infty)$ the
function $F(x) \eqdef \PE_x \left[ \sum_{k=0}^{\sigma_\Cset} f(\Phi_k) \right]$
satisfies
\[
PF(x) = \PE_x \left[ \sum_{k=1}^{\tau_\Cset} f(\Phi_k) \right] = F(x) - f(x) +
\un_\Cset(x) \ \PE_x \left[ \sum_{k=1}^{\tau_\Cset} f(\Phi_k) \right] \leq F(x)
-f(x) + b \un_\Cset(x)
\]
where $b \eqdef \sup_{x \in \Cset} \PE_x \left[ \sum_{k=1}^{\tau_\Cset}
  f(\Phi_k) \right]$.

\subsection{Proof of Proposition~\ref{Drift:sequence}}
\label{sec:ProofDrift:sequence}
Upon noting that $r,f$ are non-negative and that $\tau^{k+1} = \tau^k + \tau
\circ \theta^{\tau^k}$ $\PP_x$-\as
  \[
    \PE_x \left[ \sum_{k=0}^{\tau_\Cset -1} r(k) \ f(\Phi_k) \right] \leq \PE_x
    \left[ \sum_{k=0}^{\tau^{\bar \tau_\Cset} -1} r(k) \ f(\Phi_k) \right] \leq
    \PE_x \left[ \sum_{k=0}^{\bar \tau_\Cset -1} \sum_{j= 0}^{\tau \circ
        \theta^{\tau^k}-1} \ r(j+ \tau^k) \ f(\Phi_{j+\tau^k}) \right] \eqsp.
  \]
  By definition of the random time $\tau$, $\tau \circ \theta^{\tau^k} =
  n(\Phi_{\tau^k})$ and this implies
  \begin{align*}
    \PE_x \left[ \sum_{k=0}^{\tau_\Cset -1} r(k) \ f(\Phi_k) \right] & \leq
    \PE_x \left[ \sum_{k=0}^{\bar \tau_\Cset -1} \sum_{j=
        0}^{n(\Phi_{\tau^k})-1} \ r(j+ \tau^k) \ f(\Phi_{j+\tau^k}) \right] \\
    & = \PE_x \left[ \sum_{k=0}^{\bar \tau_\Cset -1} \sum_{l \geq k}
      \un_{\tau^k = l} \PE_{\Phi_l} \left[ \sum_{j= 0}^{n(\Phi_{0})-1} \ r(j+
        l) \ f(\Phi_{j}) \right]\right].
  \end{align*}
  By the drift assumption, $\PP_x$-\as,
\[
\PE_{\Phi_l} \left[ \sum_{j= 0}^{n(\Phi_{0})-1} \ r(j+ l) \ f(\Phi_{j}) \right]
\leq V_l(\Phi_l) - \PE_{\Phi_l} \left[ V_{l+n(\Phi_0)}(\Phi_{n(\Phi_0)})\right]
+ S_l(\Phi_l) \un_\Cset(\Phi_l) \eqsp,
\]
so that
\begin{multline*}
  \PE_x \left[ \sum_{k=0}^{\tau_\Cset -1} r(k) \ f(\Phi_k) \right] \leq \PE_x
  \left[ \sum_{k=0}^{\bar \tau_\Cset -1} \left\{ V_{\tau^k}(\Phi_{\tau^k}) -
      V_{{\tau^k}+n(\Phi_{\tau^k})}(\Phi_{\tau^k+n(\Phi_{\tau^k})}) +
      S_{\tau^k}(\Phi_{\tau^k}) \un_\Cset(\Phi_{\tau^k}) \right\} \right] \\
  \leq \PE_x \left[ \sum_{k=0}^{\bar \tau_\Cset -1} \left\{
      V_{\tau^k}(\Phi_{\tau^k}) - V_{\tau^{k+1}}(\Phi_{\tau^{k+1}}) +
      S_{\tau^k}(\Phi_{\tau^k}) \un_\Cset(\Phi_{\tau^k}) \right\} \right] \leq
  V_0(x) + S_0(x) \un_\Cset(x) \eqsp.
\end{multline*}

\subsection{Proof of Theorem~\ref{theo:subsampled2moments}}
\label{proof:theo:subsampled2moments}
Define $\tau \eqdef n(\Phi_0)$ and the iterates $\tau^1 \eqdef \tau $,
$\tau^{k+1} \eqdef \tau \circ \theta^{\tau^k} + \tau^k$ for $k \geq 1$, where
$\theta$ denotes the shift operator. (By convention, $\tau^0 = 0$.) Finally,
set $\bar \tau_\Cset \eqdef \inf \{k \geq 1, \Phi_{\tau^k} \in \Cset \}$.

{\em Proof of Theorem~\ref{theo:subsampled2moments}(\ref{case1}).} By
definition of the random time $\tau$, $\PP_x$-\as, $ \tau_\Cset \leq \tau^{\bar
  \tau_\Cset} = \sum_{k=0}^{\bar \tau_\Cset -1} n\left(\Phi_{\tau^k} \right)$.
Since $t \mapsto R(t)/t$ is non-increasing, we have $R(a+b) \leq R(a) +R(b)$
for any $a,b \geq 0$.  This property, combined with the fact that $R$ is
increasing yields $\PP_x$-\as
 \[ 
 R\left( \tau_\Cset \right) \leq \sum_{l \geq 1} \un_{\bar \tau_\Cset = l} \ 
 R\left( \sum_{k=0}^{l -1} n\left(\Phi_{\tau^k} \right) \right) \leq \sum_{l
   \geq 1} \un_{\bar \tau_\Cset = l} \ \sum_{k=0}^{l -1} R \circ
 n\left(\Phi_{\tau^k} \right) \leq \sum_{k=0}^{\bar \tau_\Cset -1}
 W\left(\Phi_{\tau^k} \right) \eqsp,
 \]
 where we used $R(n(x)) \leq W(x)$ in the last inequality.  The proof is
 concluded upon noting that from the drift assumption and the Comparison
 Theorem~\cite[Proposition 11.3.2]{meyn:tweedie:1993},
 \[
 (1-\beta) \ \PE_x \left[ \sum_{k=0}^{\bar \tau_\Cset-1} W(\Phi_{\tau^k})\right]
 \leq W(x) + b \un_\Cset(x) \eqsp.
 \]
 
 {\em Proof of Theorem~\ref{theo:subsampled2moments}(\ref{case2}).} Under the
 stated assumption, the inverse $H \eqdef R^{-1}$ exists. Set $r(t) \eqdef
 R'(t) = 1/(H' \circ R(t))$. Define a sequence of measurable functions $\{H_k,
 k \in \nset \}$, $H_k : [1, \infty) \to (0, \infty)$ by $H_k(t) \eqdef
 \int_0^{H(t)} r(z+k) \ dz = R(H(t)+k)-R(k)$.  Then $H_k$ is increasing and
 concave. Indeed $H_k'(t) =\frac{r(H(t)+k)}{r(H(t))}$ and this is positive
 since $R$ is increasing. Since $R$ (and thus $H$) is increasing and t
 $t\mapsto r(t+k)/r(t)$ is non-increasing (since $R'$ is log-concave), $H_k'$
 is non-increasing.  The Jensen's inequality and the drift assumption imply for
 any $k \in \nset$, $x \in \Xset$,
\begin{align*}
  P(H_{k+n(x)}\circ W(\Phi_{n(x)})) & \leq H_{k+n(x)}\left(
    P W(\Phi_{n(x)}) \right) \leq H_{k+n(x)}\left(
    \beta W(x)  + b\un_\Cset(x)\right) \\
  & \leq H_{k+n(x)}(\beta W(x)) + H_{k+n(x)}(b) \un_\Cset(x)  \\
  & \leq H_{k} (W(x)) - \sum_{j=0}^{n(x)-1} r(k+j) +H_{k+n(x)}(b)\un_\Cset(x)
  + \mathcal{R}_k(x) \eqsp,
\end{align*}
where we defined $\mathcal{R}_k(x) \eqdef H_{k+n(x)}(\beta W(x)) -H_k(W(x)) +
\sum_{j=0}^{n(x)-1} r(k+j) $.  We now prove that $\mathcal{R}_k(x) \leq 0$
which will conclude the proof by applying Proposition~\ref{Drift:sequence} with
$V_k=H_k\circ W$. We have
\begin{align*}
  \mathcal{R}_k(x)& =  \sum_{j=0}^{n(x)-1}r(j+k) + \int_{n(x)}^{H(\beta  W(x))+n(x)}  r(z + k) dz - \int_{0}^{H(W(x))} r(z + k) dz  \\
  & \leq \sum_{j=0}^{n(x)-1}r(j+k) - \int_0^{n(x)} r(z + k) dz \eqsp,
\end{align*}
since by assumption, $H(W) \geq H(\beta W) + n$.  Now, $R$ is convex and so $r$
is non-decreasing. Therefore, $ \sum_{j=0}^{n(x)-1}r(j+k) \leq \int_0^{n(x)}
r(z+k) dz$ and this concludes the proof.

\subsection{Proof of Corollary~\ref{coro:Pi-integ}}
\label{proof:coro:Pi-integ}
\textit{(a)} This follows from \cite[Lemma 3.1 ]{nummelin:tuominen:1983} (see
also \cite[Proposition 3.1]{tuominen:tweedie:1994}).  \textit{(b)} By
\cite[Theorem 14.2.11]{meyn:tweedie:1993}, there exist measurable sets $\{A_n,
n\geq 0 \}$ whose union is full and such that $\sup_{x \in A_n} \PE_x
\left[\sum_{k=0}^{\tau_\Dset-1} W(\Phi_k) \right] < \infty$ for any accessible
set $\Dset$. For an accessible small set $\Dset$, there exists $n$ such that
$\tilde \Dset \eqdef \Dset \cap A_n$ is small and accessible.  The proof
follows by combining the results of \textit{(a)} with $\sup_{x \in \tilde
  \Dset} \PE_x \left[\sum_{k=0}^{\tau_{\tilde \Dset}-1} W(\Phi_k) \right] <
\infty$.

\bibliographystyle{plain} \bibliography{Warwick}

\begin{thebibliography}{10}

\bibitem{Connor-2007}
S.B. Connor.
\newblock {\em Coupling: Cutoffs, {CFTP} and Tameness}.
\newblock PhD thesis, University of Warwick, 2007.

\bibitem{Connor.Kendall-2007}
S.B. Connor and W.S. Kendall.
\newblock Perfect simulation for a class of positive recurrent {M}arkov chains.
\newblock {\em Ann. Appl. Probab.}, 17(3):781--808, 2007.
\newblock {C}orrigendum : {A}nn. {A}ppl. {P}robab. {\bf 17} (2007), no. 5-6,
  1808--1810.

\bibitem{dai:1995}
J.G. Dai.
\newblock On positive {H}arris-recurrence of multiclass queueing networks: a
  unifire approach via fluid limit models.
\newblock {\em Annals Applied Probab.}, 5:49--77, 1995.

\bibitem{dai:meyn:1994}
J.G. Dai and S.P. Meyn.
\newblock Stability of {G}eneralized {J}ackson {N}etworks.
\newblock {\em The Annals of Applied Probability}, 4(1):124--148, 1994.

\bibitem{dai:meyn:1995}
J.G. Dai and S.P. Meyn.
\newblock Stability and {C}onvergence of {M}oments for {M}uliclass {Q}ueuing
  {N}etworks via {F}luid {L}imit {M}odels.
\newblock {\em IEEE Transactions on automatic control}, 40(11):1889--1904,
  1995.

\bibitem{Douc:Fort:Guillin:2008}
R.~Douc, G.~Fort, and A.~Guillin.
\newblock Subgeometric rates of convergence of $f$-ergodic strong {M}arkov
  processes.
\newblock {\em Stoch. Proc. Appl.}, 2008.

\bibitem{douc:fort:moulines:soulier:2004}
R.~Douc, G.~Fort, E.~Moulines, and P.~Soulier.
\newblock Practical drift conditions for subgeometric rates of convergence.
\newblock {\em Ann. Appl. Probab.}, 14(3):1353--1377, 2004.

\bibitem{fort:moulines:2003}
G.~Fort and E.~Moulines.
\newblock Polynomial ergodicity of {M}arkov transition kernels,.
\newblock {\em Stochastic Process. Appl.}, 103:57--99, 2003.

\bibitem{Fort:Roberts:2005}
G.~Fort and G.O. Roberts.
\newblock Subgeometric ergodicity of strong {M}arkov processes.
\newblock {\em Ann. Appl. Prob.}, 15(2):1565--1589, 2005.

\bibitem{jarner:roberts:2002}
S.~F. Jarner and G.~O. Roberts.
\newblock Polynomial convergence rates of {M}arkov chains.
\newblock {\em Ann. Appl. Probab.}, 12(1):224--247, 2002.

\bibitem{krasnoselskii:rutickii:1961}
M.~Krasnosel'skii and Y.~Rutickii.
\newblock {\em Convex functions and Orlicz spaces}.
\newblock Noordhoff, Groningen, 1961.

\bibitem{lindvall:1979}
T.~Lindvall.
\newblock On coupling of discrete renewal processes.
\newblock {\em Z. Wahrscheinlichkeitstheorie verw. Gebiete}, pages 57--70,
  1979.

\bibitem{meyn:2008}
S.~Meyn.
\newblock {\em Control techniques for complex networks}.
\newblock Cambridge, 2008.

\bibitem{meyn:tweedie:1994}
S.~Meyn and R.L. Tweedie.
\newblock State-dependent criteria for convergence of {M}arkov chains.
\newblock {\em Ann. Appl. Probab.}, 4:149--168, 1994.

\bibitem{meyn:tweedie:1993}
S.~P. Meyn and R.~L. Tweedie.
\newblock {\em {M}arkov Chains and Stochastic Stability}.
\newblock London, 1993.

\bibitem{meyn:tweedie:1993:art}
S.P. Meyn and R.L. Tweedie.
\newblock Generalized {R}esolvents and {H}arris {R}ecurrence of {M}arkov
  {P}rocesses.
\newblock {\em Contemporary Math.}, 149:227--250, 1993.

\bibitem{meyn:tweedie:1993b}
S.P. Meyn and R.L. Tweedie.
\newblock Stability of {M}arkovian {P}rocesses ii: continuous-time processes
  and sampled chains.
\newblock {\em Adv. Appl. Prob.}, 25:487--517, 1993.

\bibitem{Meyn:Tweedie:1993c}
S.P. Meyn and R.L. Tweedie.
\newblock Stability of markovian processes {III}: {F}oster-{L}yapunov criteria
  for continuous-time processes.
\newblock {\em Adv. Appl. Prob.}, 25:518--548, 1993.

\bibitem{nummelin:tuominen:1983}
E.~Nummelin and P.~Tuominen.
\newblock The rate of convergence in {O}rey's theorem for {H}arris recurrent
  {M}arkov chains with applications to renewal theory.
\newblock {\em Stochastic Process. Appl.}, 15:295--311, 1983.

\bibitem{robert:2003}
P.~Robert.
\newblock {\em Stochastic Networks and Queues}.
\newblock Stochastic Modelling and Applied Probability Series. Springer-Verlag,
  2003.

\bibitem{tuominen:tweedie:1994}
P.~Tuominen and R.~Tweedie.
\newblock Subgeometric rates of convergence of $f$-ergodic {M}arkov {C}hains.
\newblock {\em Adv. in Appl. Probab.}, 26:775--798, 1994.

\end{thebibliography}

 \end{document}